\title{Containing the spread of a contagion on a tree}
\author{
	Michela Meister \\ 
	meister@cs.cornell.edu
	\and
	Jon Kleinberg \\
	kleinberg@cornell.edu
	}
\begin{document}
\maketitle

\begin{abstract}
Contact tracing can be thought of as a race between two processes: an infection process and a tracing process. In this paper, we study a simple model of infection spreading on a tree, and a tracer who stabilizes one node at a time. We focus on the question, how should the tracer choose nodes to stabilize so as to prevent the infection from spreading further? We study simple policies, which prioritize nodes based on time, infectiousness, or probability of generating new contacts.
\end{abstract}

\section{Introduction} \label{section:introduction}
Mathematical models have played an important role in epidemiology, providing tools and frameworks complementing empirical and public health research. One key example are branching processes, which lead to the development of the $R_0$ metric for measuring the spread of disease~\cite{martcheva}. While there are many mathematical models of the spread of disease, far fewer models exist for contact tracing. Here we present an initial mathematical model of contact tracing which we use to explore algorithmic questions in designing contact tracing interventions.

Contact tracing is the iterative process of identifying individuals (the \textit{contacts}) exposed to an infected case~\cite{hiv_guide,tb_guide,vecino}. These contacts may then be tested for infection, treated, or quarantined, depending on the nature of the disease, to limit the spread of further infections. 

This can be thought of as a race between two processes: an infection process and a tracing process. The goal of the tracing process is to identify infected cases faster than the disease spreads, so that eventually no new infections occur, ie the infection is \textit{contained}. Contact tracing is often implemented by teams of human tracers, and as a result, the tracing process is limited by the number of human tracers available. A key strategic decision is how to maximize the effectiveness of this limited tracing capacity~\cite{muller-kretzschmar,kwok,spencer}. To simplify things, we model the tracing process as a single \textit{tracer} who is given a list of contacts exposed to infection. We focus on the question, given a list of contacts exposed to infection, which contact should the tracer investigate or \textit{query} next? In particular, how does the tracer's policy for querying contacts affect the probability that the infection is contained?

One of the challenges in studying these questions is the lack of simple models that manage to articulate trade-offs between the infection and tracing processes. In the contact tracing literature, there are few models which simultaneously capture the dynamics of an infection process and a resource-constrained tracing process. Recent surveys on the contact tracing literature specifically note that ``few models take the limited capacity of the public health system into account''~\cite{muller-kretzschmar} or ``consider...the practical constraints that resources for contact tracing and follow-up control measures might not be available at full throttle''~\cite{kwok}. Meanwhile, the literature on probabilistic models provides many epidemic models on trees, but these models do not consider the effect of a tracing process. Thus it seems as if a model describing the interaction between an infection process and a tracing process has been absent from these two fields. 

\paragraph{Related work.} A few other papers analyze contact tracing under resource constraints, however in somewhat different settings. In~\cite{meister-kleinberg}, Meister and Kleinberg develop a model of contact tracing in which the infection and tracing processes operate in two disjoint phases. In the first phase the infection spreads throughout the population; in the second phase the population is in ``lockdown'' and no new infections occur. Tracing proceeds in the second phase, and the tracer's objective is to identify infected nodes efficiently so as to maximize a total ``benefit''. In contrast, this paper studies concurrent infection and tracing processes, where the tracer's objective is to contain the spread of the infection.

Armbruster and Brandeau also study contact tracing under resource constraints in~\cite{armbruster-brandeau, armbruster-brandeau_sim}. In their model there are fixed resources to allocate across the two interventions, contact tracing and surveillance testing. The primary goal is to find the optimal allocation of resources so as to provide the best health outcomes for the population. They evaluate a few simple policies for prioritizing contacts in~\cite{armbruster-brandeau_sim}, and choose the policy that results in the lowest prevalence of infection for their main analysis in~\cite{armbruster-brandeau}. However, their main focus is on determining the best allocation of resources across these two systems. In comparison, our work focuses on analyzing and measuring the performance of different prioritization policies across a wide range of infection parameters. Finally, in a somewhat different context, Ben-Eliezer, Mossel, and Sudan study a mathematical model of information spreading on a network with errors in communication and investigate approaches for error correction~\cite{mossel}.

\paragraph{The model.} To address our questions, we need to be able to define trade-offs between the tracer's policy for querying individuals and factors such as an individual's rate of meeting new contacts and the probability that they transmit the infection to a contact. To do this, we develop a simple model of contact tracing on a tree, which involves concurrent infection and tracing processes. 

First we describe the infection process uninhibited by any tracing. Each individual is represented as a node with a binary infection status. A node $v$ is governed by two parameters: the probability $q_v$ that it meets a new contact and the probability $p_v$ that it transmits the infection to a contact. These parameters are sampled independently for each node, with $p_v \sim D_p$ and $q_v \sim D_q$. We will discuss more about these distributions later on, but the problem is still interesting even when both distributions take just a single fixed value. Initially all nodes are uninfected. In round $t = 0$ a node $r$ becomes infected with a probability drawn from $D_p$. In each round $t > 0$, each node $v$ meets a new contact $u$ with probability $q_v$. If $v$ is infected, it infects $u$ with probability $p_v$. This process generates a tree, where $r$ is the root, the nodes in the first layer are $r$'s contacts, the nodes in the second layer are contacts of those contacts, and so on. If a node $v$ joins the tree in round $t$, its \textit{time-of-arrival} is $\tau_v = t$. 

In order to define the tracing process, we assign each node a second binary status, indicating whether it is \textit{active} or \textit{stable}. A node that is \textit{active} probabilistically generates new contacts at each round, as defined by the infection process. A node that is \textit{stable} no longer generates new contacts and therefore cannot further spread the infection. Initially, every node is \textit{active}. 

Contact tracing starts once the infection is already underway, at time $t = k$, when the tracer identifies a root $r$ as an index case. From then on the tracer selects one node to \textit{query} at each step. Note that, while the tracer only queries one node at each step, we can change the rate of tracing relative to the infection process by changing the contact probability $q$. Increasing $q$ causes the infection to spread more quickly thereby decreasing the relative rate of tracing, and decreasing $q$ causes the infection to spread more slowly, thereby increasing the relative rate of tracing. Querying a node reveals its infection status, and if a node is infected, two events occur: (1) the node is stabilized and (2) the node's children (ie contacts) are revealed. Thus querying an infected node has two benefits: it prevents further infections and reveals individuals exposed to infection.\footnote{If an individual is found to be uninfected, they do not need to be stabilized, since they cannot infect any contacts.} At step $t = k$ the only node the tracer may query is the root $r$. From then on the tracer may only query a node if its parent is an infected node queried on an earlier step.

We now describe the concurrent infection and tracing processes. We say that an \textit{instance} of the contact tracing problem is defined by the three parameters $D_p$, $D_q$, and $k$. The process begins at step $t = 0$ when the root $r$ becomes infected with a probability drawn from $D_p$. The infection process runs uninhibited for steps $0 \leq t < k$. During each step $t \geq k$, first the tracer queries a node and then a single round of the infection process runs. During the infection round only active nodes generate new contacts. The infection is \textit{contained} if the tracer stabilizes all infected nodes. 

\paragraph{Policies for querying nodes.} We can think of the tracing process as maintaining a subtree where each node in the subtree has an infected parent. The \textit{frontier} is the set of all leaves in the subtree which have not yet been queried. We assume that the tracer observes the triple $(p_u, q_u, \tau_u)$ for each node in the subtree and that, for the purposes of querying, any two nodes in the frontier with the same triple of parameters are indistinguishable. A \textit{policy} is any rule that dictates which node from the frontier is queried next. Note that if the frontier is empty, then all infected nodes have been stabilized, and therefore the infection is contained. 

We say that a policy is \textit{non-trivial} if it only queries the children of infected nodes. (Since the children of uninfected nodes are guaranteed to be uninfected, there is no reason to query them.) The remainder of the paper considers only non-trivial policies.

\begin{figure}[htp]
\includegraphics[scale=.5]{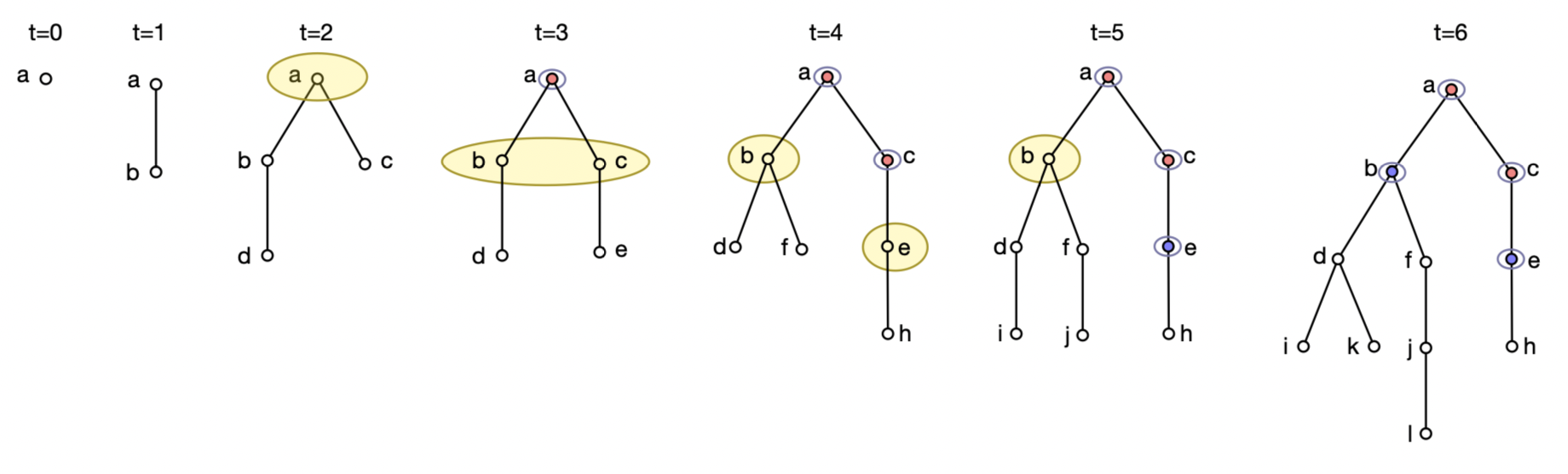}%
\caption{
This example illustrates concurrent infection and tracing processes. The infection process begins at $t=0$ and runs uninhibited for three steps. Tracing begins at $t=3$. At the start of step $t=3$, $a$ is the only node in the frontier. The tracer queries $a$, and since $a$ is infected, its children $b$ and $c$ join the frontier. Then another round of the infection process runs, in which every node that has not yet been queried probabilistically generates a new contact. In this case, $c$ generates child $e$. At $t=4$, the tracer queries $c$, which is infected, so $e$ joins the frontier. Another round of the infection process runs, where $b$ generates child $f$ and $e$ generates child $h$. At $t=6$, the tracer queries $b$, the sole node in the frontier. Node $b$ is uninfected, so the frontier is empty, and thus the infection is contained.}
\label{fig:intro_ex_descend}
\end{figure}

\subsection{Summary and overview of results} 
We analyze the effectiveness of different tracing policies, with a primary focus on the following question.

\begin{question}
How does the tracer's policy for querying nodes affect the probability that the infection is contained?
\end{question}

We study this question via both theoretical analysis and computational experiments. To begin, we establish basic theoretical bounds in~\cref{section:theory}, which characterize the performance of any non-trivial policy under certain conditions. In particular, we show that if either the infection probability $p_v$ or the contact probability $q_v$ is sufficiently small for all nodes, then any non-trivial policy contains the infection with high probability. On the other hand, we show that if both contagion parameters are sufficiently large, then every policy fails with high probability. 

Thus the results in this first section focus on settings in which policy choice is inconsequential; either any non-trivial policy is likely to contain the infection, or no non-trivial policy is likely to contain the infection. This motivates the question of whether there exists an instance in which the choice of policy \textit{is} significant.

\begin{restatable}{question}{mainques}
Is there an instance and a pair of policies $\A_1$ and $\A_2$ so that the probability of containment under $\A_1$ is greater than the probability of containment under $\A_2$?
\end{restatable}

This question is the primary focus of the remainder of the paper. To begin, we start with the simplest setting possible, where $D_p$ and $D_q$ are point mass distributions, that is, where all nodes have the same values of $p$ and $q$. In such a setting, one might think that the probability of containment ought to be agnostic to the policy chosen by the tracer. However, two nodes with the same infection and contact parameters $p$ and $q$ may still differ in their time-of-arrival $\tau$. 

There are two natural policies for ordering nodes by time-of-arrival. The \textit{ascending-time} policy orders nodes by ascending time-of-arrival, and the \textit{descending-time} policy orders nodes by descending time-of-arrival. In~\cref{section:separation} we prove that for a specific choice of $p$ and $q$, descending-time has a strictly higher probability of containment than ascending-time. Given this result, one might wonder whether descending-time is the better strategy in all instances. In~\cref{section:arrival} we compare the performance of ascending-time and descending-time for a large range of contagion parameters via computational experiments and find numerous instances in which we observe a significantly higher probability of containment for ascending-time. From these computational experiments, we find that each of the two policies commands a region of the parameter space of non-trivial size where it enjoys signficantly better performance, and we find that descending-time has the larger region.

This result still leaves open the question of how close these simple time-of-arrival policies are to an optimal policy. We study this question in~\cref{section:general} by training an optimal policy via reinforcement learning, where we formulate contact tracing as a game that the tracer ``wins'' if the infection is contained and ``loses'' otherwise. We train our policy via q-learning and find that its probability of containment is close to that of our policies based on time-of-arrival. This suggests that analyzing simple policies, such as prioritizing by time-of-arrival, gives us at least some insight into the performance of an optimal policy.

Since parameters $p$ and $q$ are fixed in the above settings, the only parameter with which to prioritize nodes is their time-of-arrival. In our final computational experiment, we explore settings in which each node $v$ has parameters $p_v$ and $q_v$ sampled from a distribution. Here we compare two other simple policies, prioritizing in descending order of $p_v$ or prioritizing in descending order of $q_v$, alongside the descending-time policy. We evalutate the performance of these policies across a range of distributions for $p_v$ and $q_v$ and find that the dominant policy is described by a phase diagram where the best policy in a specific setting depends on the distribution generating $p_v$ and $q_v$. 

\paragraph{Paper organization.} We begin with a summary of related work.~\Cref{section:theory} establishes basic theoretical bounds.~\Cref{section:separation} shows a specific instance where policy choice provably affects probability of containment.~\Cref{section:arrival} studies this question further, by comparing the performance of the ascending-time and descending-time policies via computational experiments.~\Cref{section:general} formulates contact tracing as reinforcement learning problem and conducts a heuristic search for other time-based polices to match the performance of ascending-time and descending-time.~\Cref{section:beyond} compares the performance of other policies for prioritizing nodes beyond time-based methods. Finally,~\cref{section:conclusion} presents future work and open questions.

\paragraph{Further Related Work}
Tian et al. study Tuberculosis contact tracing on a simulated network based on the population of Saskatchewan, Canada, and compare different prioritization policies for tracing individuals, with a particular focus on prioritizations based on patient demographics~\cite{tian}. Prior work by Fraser et al. and Klinkenberg studies when an outbreak of a disease may be contained by tracing and isolation interventions, with a focus on HIV, smallpox, and influenza, among other diseases~\cite{fraser,klinkenberg}. Hellewell et al. study this question for COVID-19 specifically~\cite{hellewell}. Kretzschmar et al. study the effect of time delays on contact tracing for COVID-19 via computational simulations~\cite{kretzschmar}. Kwok et al. review models of contact tracing and call for more models to account for resource constraints in tracing~\cite{kwok}. Kaplan et al. model a tracing and vaccination response to a bioterrorism attack in~\cite{kaplan_analyzing,kaplan_emergency}. 

Muller et al. study contact tracing as a branching process~\cite{muller}. Eames et al. study different contact tracing strategies for a compartmental model of infection~\cite{eames,hethcote-yorke}. Eames and Keeling study the relationship between the fraction of contacts which are traced and the rate at which the disease spreads in the context of sexually transmitted diseases~\cite{eames-keeling}.

\section{Basic Theoretical Bounds} \label{section:theory}
Recall that a non-trivial policy is one which only queries the children of infected nodes. Our basic theoretical bounds define conditions under which any non-trivial policy succeeds and under which any non-trivial policy fails. We focus on the following question.

\begin{question}
Fix a non-trivial policy $P$. Under what conditions, with high probability, does $P$ contain the infection? Under what conditions, with high probability, does $P$ fail to contain the infection?
\end{question}

\Cref{fig:section_2_thms} outlines our results in this section. First we establish that, if either the infection probability $p_v$ or contact probability $q_v$ is sufficiently small for all nodes, then any non-trivial policy contains the infection with high probability, which is shown in~\cref{thm:low_q,thm:low_p}. On the other hand, if both the infection probability and contact probability are sufficiently large for all nodes,~\cref{thm:runaway} shows that for any fixed non-trivial policy $P$, with high probablity $P$ does not contain the infection.

\begin{figure} [h]
\begin{center}
\includegraphics[scale=.5]{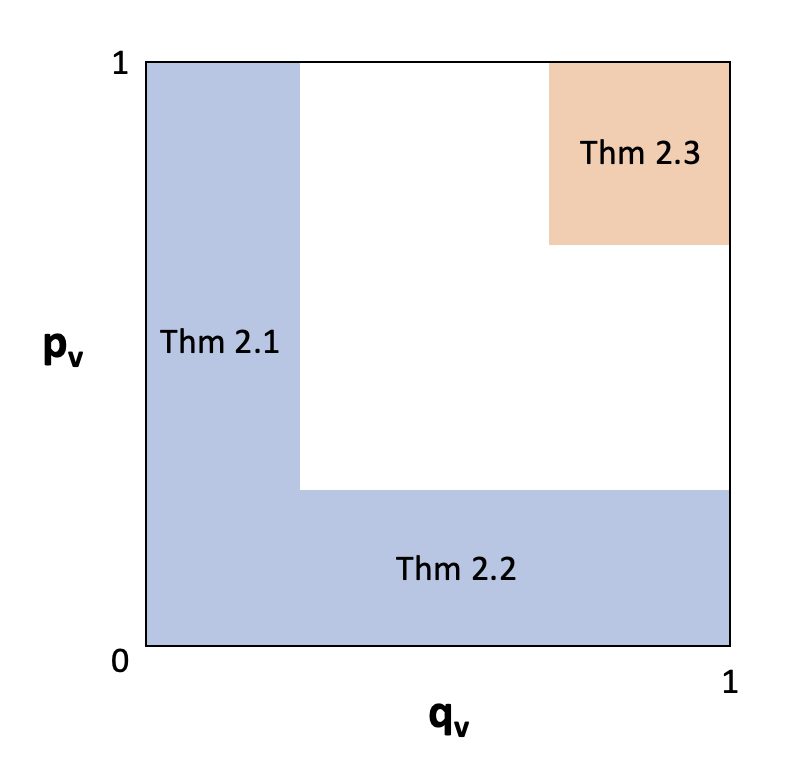}
\caption{Our basic theoretical bounds describe parameter regimes in which the choice of policy is inconsequential. If the infection or contact probability is very low for all nodes, then any non-trivial policy contains the infection with high probability, as shown in~\cref{thm:low_q,thm:low_p}. However, if the infection and contact probabilities are both very high for all nodes, then containment is highly unlikely, regardless of the non-trivial policy chosen, as shown in~\cref{thm:runaway}.}
\label{fig:section_2_thms}
\end{center}
\end{figure}

\subsection{Conditions under which containment is likely}
We present two theorems, which together show that if either the infection probability or contact probability is below a certain threshold for all nodes, then any non-trivial policy contains the infection with high probability. For the following two theorems it will be helpful to analyze the tracing process through the lens of deferred decisions. This analysis changes nothing about how the contact tracing process is defined, but simply makes it easier for us to analyze. First we will generate a \textit{transcript} $T_0', T_1', \dots$ of a tree with the given contagion parameters growing uninhibited by any tracing. During the tracing process, we construct infection tree $T_0, T_1, \dots$ by ``replaying'' the transcript. For example, when a new round of infection occurs at time $t$, we refer to $T_t'$ to determine the nodes to add to $T_t$ and their infection statuses. The benefit of this framework is that we can prove claims about the transcript $T_0', T_1', \dots$, which is often much easier to analyze, and show that these claims hold for the infection tree as well.

To start, we show that if the contact probability $q_v$ is sufficiently small for all nodes, then any non-trivial policy contains the infection with high probability. 
\begin{restatable}{theorem}{thmlowq}
\label{thm:low_q}
Fix a failure probability $\delta \in (0, 1)$ and an arrival time $k \in \NN$. Suppose that each node $v$ has infection probability $p_v \leq 1$. There is a $q(\delta, k) \in (0, 1]$ such that, if each node $v$ has contact probability $q_v < q(\delta, k)$, then any non-trival policy contains the infection with probability at least $1 - \delta$.
\end{restatable}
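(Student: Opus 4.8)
The plan is to exploit the transcript framework to show that, for $q$ small enough, with probability at least $1-\delta$ the infection never spreads beyond the root at all, so that containment holds for a trivial reason under \emph{every} non-trivial policy. The key observation is that, for fixed $k$ and $\delta$, driving $q$ to $0$ makes the event ``nothing ever happens'' likely, and on that event the choice of policy is irrelevant.

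First I would pin down the only window during which the root $r$ can generate contacts before it is stabilized. The infection runs uninhibited for steps $0 \le t < k$; step $0$ merely infects $r$, so steps $1, \ldots, k-1$ are the only uninhibited rounds in which $r$ can produce a contact. At step $t = k$ the tracer queries first and then the infection round runs, and since $r$ is the unique node in the frontier at step $k$, the tracer must query $r$, stabilizing it \emph{before} the infection sub-round of step $k$. Hence $r$'s entire window for generating contacts consists of exactly the $k-1$ uninhibited rounds.

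Next I would define the good event $E$ = ``$r$ generates no contact during steps $1, \ldots, k-1$.'' Since contact generation is governed solely by the contact probability and is independent of infection status, $\Pr[E] = (1-q_r)^{k-1}$, where $q_r$ is the root's contact probability. On $E$ the root has no children, so in the transcript — and therefore in the replayed infection tree — no other node is ever born; the infection tree is exactly $\{r\}$ when tracing begins. Querying $r$ at step $k$ then empties the frontier, and because every queried node is stabilized and stabilized nodes never again generate contacts, the frontier stays empty and the infection is contained under any non-trivial policy. To finish, I would choose the threshold: for $k \le 1$ the event $E$ has probability $1$ and any $q \in (0,1]$ works, while for $k \ge 2$ it suffices to take $q(\delta, k) = 1 - (1-\delta)^{1/(k-1)} \in (0,1]$, so that $q_r < q(\delta,k)$ forces $(1-q_r)^{k-1} > 1 - \delta$ and hence $\Pr[E] > 1-\delta$.

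There is essentially no obstacle in this single-node argument; the only care needed is the timing bookkeeping above, namely that $r$'s spreading window is precisely the $k-1$ uninhibited rounds and that stabilization is permanent. The argument is clean exactly because $k$ and $\delta$ are fixed while $q \to 0$. If instead one wanted a bound that degrades gracefully as $q$ grows (paralleling the low-$p$ theorem), the harder route would be to bound the total number of nodes in the transcript through time $k$ — for instance via Markov's inequality on the expected number of contacts, using that the transcript at most doubles each round and so has at most $2^k$ nodes — and then argue that the one-query-per-round tracer clears this small frontier faster than the slow, $q$-rate infection can replenish it. Controlling that race between clearing and replenishment throughout the tracing phase, uniformly over all non-trivial policies, is the step that would require real work, and the single-node event is attractive precisely because it avoids it entirely.
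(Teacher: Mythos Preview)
Your argument is correct, and it takes a genuinely different route from the paper's. The paper does exactly the ``harder route'' you sketch in your last paragraph: it sets $m = \lceil e/\delta \rceil + k$ and $q(\delta,k) = 1/m$, bounds the expected size of the transcript at time $m$ by $(1+q)^m$, and then applies Markov's inequality to conclude that with probability at least $1-\delta$ the transcript has at most $m-k$ nodes, so the tracer (one query per step since time $k$) has already cleared everything. Your approach sidesteps the race entirely by forcing the root to produce no children at all, which is a strictly simpler event to control.

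What each buys: your proof is shorter and more elementary (no Markov, no tree-growth bound, no clearing-versus-replenishment argument), and for small $k$ your threshold $1-(1-\delta)^{1/(k-1)} \approx \delta/(k-1)$ is actually a bit larger than the paper's $1/(\lceil e/\delta\rceil + k) \approx \delta/e$. The paper's approach, on the other hand, is the reusable one: the same template (bound the number of relevant nodes in the transcript at a well-chosen time $m$, then invoke the one-query-per-step clearing rate) is exactly what drives the companion low-$p$ theorem, where there is no analogue of your single-node trick because the root will almost surely acquire children when $q$ is unconstrained.
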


\Cref{appendix:low_q} provides the proof of~\cref{thm:low_q}. Similarly, the following theorem demonstrates that if for all nodes the probability of infection is below a certain threshold, then any non-trivial policy contains the infection with high probability.

\begin{restatable}{theorem}{thmlowp}
\label{thm:low_p}
Fix a failure probability $\delta \in (0, 1)$ and an arrival time $k \in \NN$. Suppose that each node $v$ has contact probability $q_v \leq 1$. There is a $p(\delta, k) \in (0, 1]$ such that, if each node $v$ has infection probability $p_v < p(\delta, k)$, then any non-trival policy contains the infection with probability at least $1 - \delta$.
\end{restatable}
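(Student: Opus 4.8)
The plan is to bound the probability that the infection ever spreads at all once $p_v$ is small enough, mirroring the structure of the proof of \cref{thm:low_q} but using the infection parameter $p$ rather than the contact parameter $q$. I would again work in the deferred-decisions framework: generate a transcript $T_0', T_1', \dots$ of the tree growing uninhibited, and argue about the total number of infected nodes that ever appear in this transcript within a bounded window of time. The key observation is that for containment, it suffices that the set of infected nodes the tracer must stabilize is finite and small; if $p$ is tiny, then with high probability very few transmissions ever succeed, so the infected subtree stays small and any non-trivial policy can clear it.

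The main steps I would carry out are as follows. First, I would control the size of the infected tree at time $k$, when tracing begins. Because the root is infected and infections propagate only when a transmission succeeds (probability at most $p$ per contact), the expected number of infected nodes after $k$ uninhibited rounds is bounded by something like $(1+p)^k$ or a comparable branching bound; more carefully, I would show that with probability at least $1 - \delta/2$ the total number of infected nodes ever created in the first $k$ rounds is at most some constant $M(\delta, k)$ as $p \to 0$. Second, I would argue that once tracing begins, the tracer queries one infected node per step and stabilizes it, so the infected frontier can only grow through \emph{new} successful transmissions during the concurrent phase. Each active infected node succeeds in transmitting in a given round with probability at most $p$, so by a union bound over the bounded number of infected nodes and a bounded number of rounds needed to clear them, the probability that \emph{any} new infection occurs during tracing is at most some quantity that tends to $0$ as $p \to 0$. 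Third, I would choose $p(\delta, k)$ small enough that both bad events (too many initial infections, or any new infection during tracing) each have probability at most $\delta/2$, and conclude by a union bound that with probability at least $1 - \delta$ the tracer faces only finitely many infected nodes and no new infections arise, so any non-trivial policy stabilizes all of them and contains the infection.

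The hard part will be handling the concurrency correctly: during the tracing phase, infected nodes that have not yet been queried remain active and can continue to generate infected contacts, and the number of rounds required to clear the frontier depends on its size, which is itself random. I would need to set up the union bound so that the bound on the number of rounds (and hence the number of transmission opportunities) is itself controlled with high probability, rather than conditioning on an unbounded quantity. One clean way to do this is to argue that, conditioned on the high-probability event that at most $M(\delta,k)$ infected nodes exist at time $k$ and no new infection occurs thereafter, the tracer clears the frontier in at most $M(\delta,k)$ steps, which closes the loop: no new infections means the frontier only shrinks. Making this conditioning rigorous, so that the probability of a new infection during the (random but bounded) clearing phase genuinely vanishes as $p \to 0$, is the step I expect to require the most care.
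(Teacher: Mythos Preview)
Your plan is reasonable and different from the paper's, but the step you yourself flag as delicate has a real gap. The assertion ``no new infections means the frontier only shrinks'' is false: when the tracer queries an infected node $v$, \emph{all} of $v$'s accumulated children join the frontier, and with $q_v$ possibly equal to $1$ there can be many of them (all uninfected under your conditioning, but still in the frontier). So even with only $M$ infected nodes and no further transmissions, the tracer will generally not clear the frontier---nor even stabilize all $M$ infected nodes---within $M$ steps. Concretely, with $q_v \equiv 1$ and the $M$ infected nodes lying along a chain, an unhelpful non-trivial policy may wade through roughly $k$ uninfected siblings before reaching the second infected node, roughly $2k$ before the third, and so on, giving a time to stabilize all $M$ on the order of $2^M k$. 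Each still-active infected node keeps generating contacts throughout, so the number of transmission opportunities you must union-bound over is not $O(M)$ but something like $M \cdot 2^M k$. The feedback loop you identified is therefore not closed by the argument you sketch; making it rigorous would require a genuinely different bound on the clearing time.

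The paper avoids this circularity with a different accounting. Rather than tracking infected nodes and splitting into pre-tracing and tracing phases, it fixes a single target time $m > k$ and counts, in an uninhibited super-transcript where every contact probability is replaced by $1$, the number of nodes with an \emph{infected parent}---exactly the set of nodes a non-trivial policy could ever query. Because that super-transcript is a binomial tree with $\binom{t}{d}$ nodes at depth $d$, this count has expectation at most $\sum_d \binom{m}{d} p^d = (1+p)^m$; Markov's inequality with $m = \lceil e/\delta \rceil + k$ and $p(\delta,k) = 1/m$ then gives that it is at most $m - k$ with probability at least $1-\delta$. Since any non-trivial policy has either already emptied the frontier or made $m-k$ distinct queries (all to nodes with infected parents) by time $m$, the frontier must be empty by then. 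Comparing, at one fixed time, the total supply of queryable nodes to the number of queries made is what sidesteps the dependence of tracing time on frontier growth.
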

\Cref{appendix:low_p} provides the proof of~\cref{thm:low_p}.

With the above two theorems, we have established that if either the infection probability or the contact probability is below a certain threshold, any policy contains the infection with high probability.

\subsection{Conditions under which containment is unlikely}
Here we show that if both the infection probability and contact probability are above a certain threshold for all nodes, then the infection is unlikely to ever be contained.
\begin{theorem} \label{thm:runaway}
Fix a policy $P$. Fix $\delta \in (0,1)$ and $k \geq 3$. There exist $p, q < 1$ such that, if for all nodes $v$ $p_v \geq p$ and $q_v \geq q$, with probability at least $1 - \delta$, $P$ does not contain the infection.
\end{theorem}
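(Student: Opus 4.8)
The plan is to track the number of \emph{active infected nodes} over time and to show that, once the contagion parameters are large enough, this quantity is overwhelmingly likely to grow without bound, so that no policy can ever drive it to zero — which is exactly what containment requires. Write $\theta$ for the probability that a given active infected node produces a new active infected child in one infection round; if every node has $p_v \ge p$ and $q_v \ge q$ then this probability is at least $pq$, so I will set $p = q = \sqrt{\theta}$ and treat $\theta$ as a single tunable parameter pushed toward $1$. Since any policy stabilizes at most one node per step, the number of active infected nodes stochastically dominates the Markov chain $M_0, M_1, \dots$ defined by the worst-case (for the infection) recurrence $M_{j+1} = (M_j - 1) + \mathrm{Bin}(M_j - 1, \theta)$, where the $-1$ records the single node the tracer removes and the binomial counts the children produced by the remaining active nodes. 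Using the transcript/deferred-decisions viewpoint from the start of the section, it therefore suffices to show that $M_j$ avoids $0$ with probability at least $1 - \delta$.

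\emph{Seeding.} First I would show that, with probability tending to $1$ as $\theta \to 1$, the number $\sigma$ of active infected descendants of the root at the moment it is queried (step $k$) is at least $3$. For any $k \ge 3$ the contagion has at least two uninhibited rounds before tracing, and the single event that the root infects a child $c_1$ in round $1$, infects a second child $c_2$ in round $2$, and that $c_1$ in turn infects a child in round $2$ already forces $\sigma \ge 3$; this event (together with the root being infected at all) has probability at least $\sqrt{\theta}\cdot\theta^{3}$, which tends to $1$. Hence $M_0 \ge \sigma \ge 3$ with high probability.

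\emph{Survival.} The crux is to bound the probability that the chain, started from $M_0 \ge 3$, is ever contained. Two structural facts drive this: (i) each step decreases $M_j$ by at most $1$, since the binomial term is nonnegative, so to reach $0$ the chain must first hit the value $2$; and (ii) from state $2$ the chain can only move to $2$ or $1$, and from $1$ it moves deterministically to $0$, so reaching $2$ makes eventual containment certain. Thus, starting from $M_0 \ge 3$, containment is exactly the event that the chain ever hits $2$. To control this I would use the exponential potential $V(n) = \rho^{n}$: a direct computation gives $\mathbf{E}[\rho^{M_{j+1}} \mid M_j = n] = [\rho(1 - \theta + \theta\rho)]^{\,n-1}$, and the calibration $\rho = 4(1-\theta)^2$ forces $1 - \theta + \theta\rho \le \sqrt{\rho}$ (which reduces to the always-true $4\theta(1-\theta) \le 1$), so the supermartingale inequality $[\rho(1-\theta+\theta\rho)]^{\,n-1} \le \rho^{n}$ holds for $n = 3$ and hence, as this is the binding case, for all $n \ge 3$. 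Optional stopping at the first hitting time of $2$ then gives $\Pr[\text{ever hit } 2 \mid M_0 = m] \le \rho^{\,m-2} \le \rho = 4(1-\theta)^2$. Combining the two parts, $\Pr[\text{contained}] \le \Pr[\sigma < 3] + 4(1-\theta)^2$, both terms vanish as $\theta \to 1$, and choosing $\theta$ so the sum is at most $\delta$ and then $p = q = \sqrt{\theta} < 1$ proves the theorem.

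The hard part is the survival step, and specifically recognizing that this is genuinely more delicate than ``a supercritical branching process survives with positive probability.'' Because the tracer removes one node per round, the population has a real downward drift at small sizes — indeed from two active nodes containment is \emph{certain} — so the seed must be pushed strictly above this critical level and the probability of ever drifting back down must be shown to be small. The quantitative heart of the argument is the calibration $\rho \asymp (1-\theta)^2$, which is exactly what turns the one-step decrease probability (of order $1-\theta$ from state $3$) into an $O((1-\theta)^2)$ bound on ever returning to the critical value while keeping $V$ a valid supermartingale.
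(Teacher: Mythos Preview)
Your argument is correct and takes a genuinely different route from the paper's. Both proofs split into a \emph{seeding} step (reach many active infections with high probability) and a \emph{survival} step (once seeded, the count never returns to zero), but the two steps are balanced very differently.

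The paper seeds with a \emph{large} threshold $B = 2\lceil\max(128,\ln(4/\delta)/2)\rceil$: it forces every one of the first $\sim B$ possible infection events to succeed (probability $\ge (pq)^{B+16}$), and then runs an epoch argument in which Chernoff bounds show the population at least doubles over each block of $B/2$ rounds, so the tracer's single removal per step is negligible. By contrast, you seed with only three or four nodes and compensate with a sharper survival analysis: the exponential potential $\rho^{M_j}$ with $\rho = 4(1-\theta)^2$ is a supermartingale on $\{M_j \ge 3\}$, and optional stopping turns the one-step drop probability of order $(1-\theta)$ into an $O((1-\theta)^2)$ bound on ever reaching the absorbing level. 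Your approach is more elegant and yields a slightly better dependence of the required $pq$ on $\delta$ (roughly $1 - pq \lesssim \delta$ versus the paper's $1 - pq \lesssim \delta/\ln(1/\delta)$); the paper's approach is more elementary in that it uses only Chernoff bounds and avoids any martingale machinery. One small presentational point: your indexing of $M_0$ relative to $\sigma$ is ambiguous (whether the root is counted), but since the event you describe actually gives $M_0 \ge 4$ at the start of step $k$, the supermartingale bound applies either way.
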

\begin{proof}
Let $h = 2 \lceil \max(128, \ln(4/\delta) / 2) \rceil + 16$. Let $f(\delta) = \max((1 - \delta/2)^{1/h}, 1/2)$, and observe that $f(\delta) < 1$. Choose $p < 1$ and $q < 1$ such that $pq \geq f(\delta)$. By~\cref{lem:f_delta} with probability at least $1 - \delta$, policy $P$ does not contain the infection.  
\end{proof}
The following lemma supports~\cref{thm:runaway}. The proof of~\cref{lem:f_delta} is deferred to~\cref{appendix:f_delta}, along with proofs for additional supporting lemmas. 
\begin{restatable}{lemma}{lemfdelta}
\label{lem:f_delta}
Fix a policy $P$. Fix $\delta \in (0, 1)$ and $k \geq 3$. Suppose that there are probabilities $p, q \in (0, 1)$ such that for all nodes $v$ $p_v \geq p$ and $q_v \geq q$. There exists a function $f(\delta) < 1$ such that if $pq > f(\delta)$, then with probability at least $1-\delta$, $P$ does not contain the infection. Specifically, setting $h= 2 \lceil \max(128, \ln(4/\delta) / 2) \rceil + 16$, we can choose $f(\delta) = \max((1 - \delta/2)^{1/h}, 1/2)$. 
\end{restatable}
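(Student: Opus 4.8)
The plan is to reduce the whole question to the behavior of a single nonnegative integer statistic---the number of \emph{active infected} nodes, i.e.\ infected nodes not yet queried (equivalently, the still-unstabilized infected nodes)---and to show that under the hypotheses this count never returns to $0$, so the frontier is never empty and containment never occurs. Write $A_t$ for this count at the start of step $t$. The crucial policy-independent observation is that a policy queries exactly one node per step and so stabilizes at most one active infected node; each of the remaining active infected nodes independently spawns a new active infected child with probability at least $pq$ in the ensuing infection round. Consequently, whatever $P$ does, $A_t$ stochastically dominates the process $B_t$ defined by $B_k \leq A_k$ and
\begin{equation*}
B_{t+1} = (B_t - 1) + \mathrm{Bin}(B_t - 1,\, pq),
\end{equation*}
and it suffices to show that $B_t \geq 1$ for all $t$ with probability at least $1 - \delta$. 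This single dominating process is what lets us quantify over \emph{all} policies at once.

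First I would build a \emph{buffer}. The hypothesis $pq > (1-\delta/2)^{1/h}$ forces $pq$---and hence each of $p$ and $q$, since $p \geq pq$ and $q \geq pq$---to lie very close to $1$, because $h$ is large ($h \geq 272$). Working on the transcript $T_0', T_1', \dots$ of the uninhibited tree via the deferred-decision framework described above, I would condition on a collection of at most $h$ specified transmission events all succeeding: the root's infection together with enough surviving edges of the early tree to guarantee that, across the uninhibited phase ($k \geq 3$ rounds of growth) and the first few tracing steps, the active-infected count climbs to a fixed threshold $m$. Since these are independent Bernoulli events each of probability at least $pq$, they all occur with probability at least $(pq)^h \geq 1 - \delta/2$; the threshold $f(\delta) = \max\!\big((1-\delta/2)^{1/h},\, 1/2\big)$ records exactly the bound on $pq$ that makes this step valid.

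Second I would run a drift argument from the buffer onward. Once $B_t \geq m$, the conditional one-step increment has mean $(B_t-1)\,pq - 1$, which is bounded below by a positive constant once $m$ is large enough (with $pq$ close to $1$, $B_t$ roughly doubles each step minus the single removal). Treating the increments as a sum of independent bounded contributions, a Hoeffding/Azuma concentration bound---this is where the constants $128$ and $\ln(4/\delta)/2$ in the definition of $h$ enter---shows that $B_t$ stays positive for all subsequent steps except with probability at most $\delta/2$. A union bound over the buffer-construction failure and the drift failure then yields total failure probability at most $\delta$; on the complementary event $A_t \geq B_t \geq 1$ for every $t$, so the frontier is never empty and $P$ does not contain the infection.

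The step I expect to be the main obstacle is the drift phase over an \emph{unbounded} time horizon against a worst-case adaptive policy. Policy-independence itself is clean---the ``stabilizes at most one per step'' domination handles every policy simultaneously---but showing $B_t$ never hits $0$ at \emph{any} future time, rather than merely within a fixed window, requires more than one Hoeffding estimate: one needs either an exponential-supermartingale / optional-stopping argument giving a geometrically small extinction probability from height $m$, or a summation over time of per-window failure probabilities, and then a check that the chosen $m$ and $h$ make these bounds multiply out below $\delta/2$. Pinning down $m$ and verifying the concentration constants is the calculation-heavy part; the rest is bookkeeping on the transcript to ensure the conditioned transmission events are independent and affect only surviving (non-stabilized) nodes.
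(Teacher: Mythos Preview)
Your two-phase plan---condition on at most $h$ early transmission events (each succeeding with probability $\ge pq$) to build a buffer of active infected nodes, then show the active-infected count never returns to zero---matches the paper's proof exactly, including the $(pq)^h \ge 1-\delta/2$ calculation for the buffer phase and the policy-free ``at most one stabilization per step'' domination. For the drift phase you flagged as the obstacle, the paper takes precisely the ``summation over per-window failure probabilities'' route you listed: it partitions time after the buffer into epochs of length $B/2$, uses a Chernoff bound to show the active-infected count at least doubles across each epoch (from $2^{m}B$ to $2^{m+1}B$) except with probability at most $\exp(-2^{m+1}B)$, and sums these geometrically decaying failure probabilities to at most $\delta/2$.
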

The idea for the proof of~\cref{lem:f_delta} has two parts. First we show that, if at any point in time, there are at least $B$ active infected nodes, then it is unlikely the infection will ever be contained. Second, we show that, with high probability, there is a time $t$ with at least $B$ active infections. We take a union bound over these two events to prove the theorem.

Thus, we've shown that there are settings in which any non-trivial policy is likely to contain the infection, as well as settings in which \textit{no} non-trivial policy is likely to contain the infection. However, does it matter which non-trivial policy we employ? The remainder of the paper studies the following question in a variety of different settings.
\mainques*
The following section proves that yes, there are settings in which different policies result in different probabilities of containment.

\section{Policy Choice Affects Probability of Containment} \label{section:separation}
In this section we show that policy choice affects the probability of containment by providing an instance in which two non-trivial policies have different probabilities of containment. Recall the two policies presented in~\cref{section:introduction}, ascending-time and descending-time. Ascending-time prioritizes nodes in order of increasing time-of-arrival and descending-time prioritizes nodes in order of decreasing time-of-arrival. In this section we demonstrate an instance in which descending-time has a strictly higher probability of containment than ascending-time. 

\begin{figure}[h]
\subfloat[Ascending-time queries the node with the earliest time-of-arrival.]{%
  \includegraphics[scale=.5]{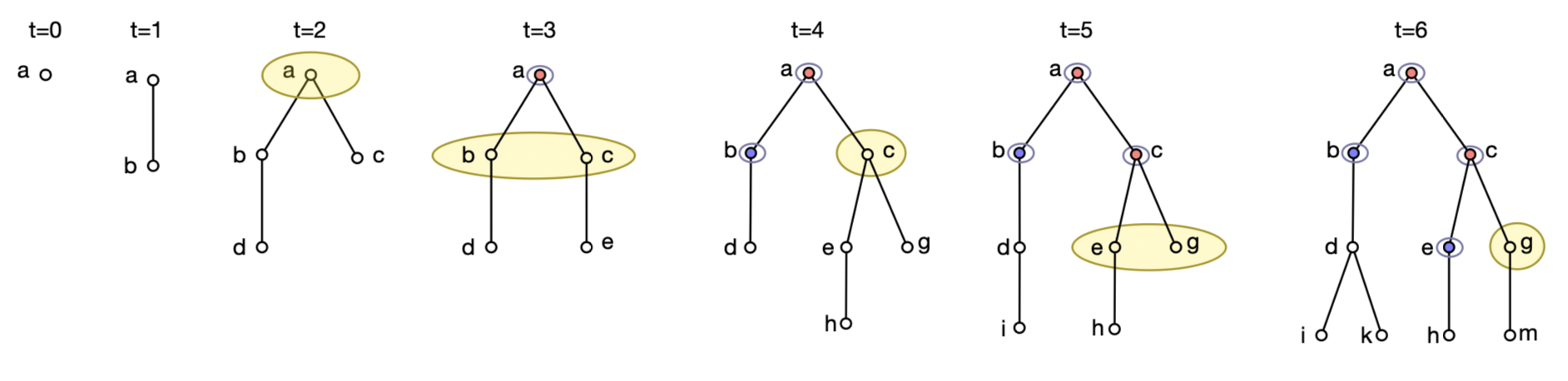}%
}
\vfill
\subfloat[Descending-time queries the node with the latest time-of-arrival.]{%
  \includegraphics[scale=.5]{figures/descend_ex}%
}
\caption{We analyze two policies which prioritize nodes based on time-of-arrival, ascending-time and descending-time. Ascending-time queries the node in the frontier with the earliest time-of-arrival, while descending-time queries the node in the frontier with the latest arrival time. The example illustrates ascending-time and descending-time operating on the same transcript. The infection process begins at $t=0$. At the start of $t=3$, node $a$ is the only node in the frontier. At $t=3$, both policies query $a$. Since $a$ is infected, its children $b$ and $c$ join the frontier. Node $b$ has time-of-arrival $\tau_b = 1$ while node $c$ has time-of-arrival $\tau_c = 2$, so at $t=4$ ascending time queries $b$ while descending-time queries $c$. By the end of $t=6$, ascending-time still has a non-empty frontier, while descending-time has contained the infection. In~\cref{thm:separation} we prove that, for a certain setting of infection parameters, descending-time has a strictly higher probability of containment than ascending-time.}
\label{fig:ascend_v_descend}
\end{figure}

\begin{figure}[h]
\centering
\subfloat[At $t=4$, ascending-time queries $b$.]{%
  \includegraphics[scale=.5]{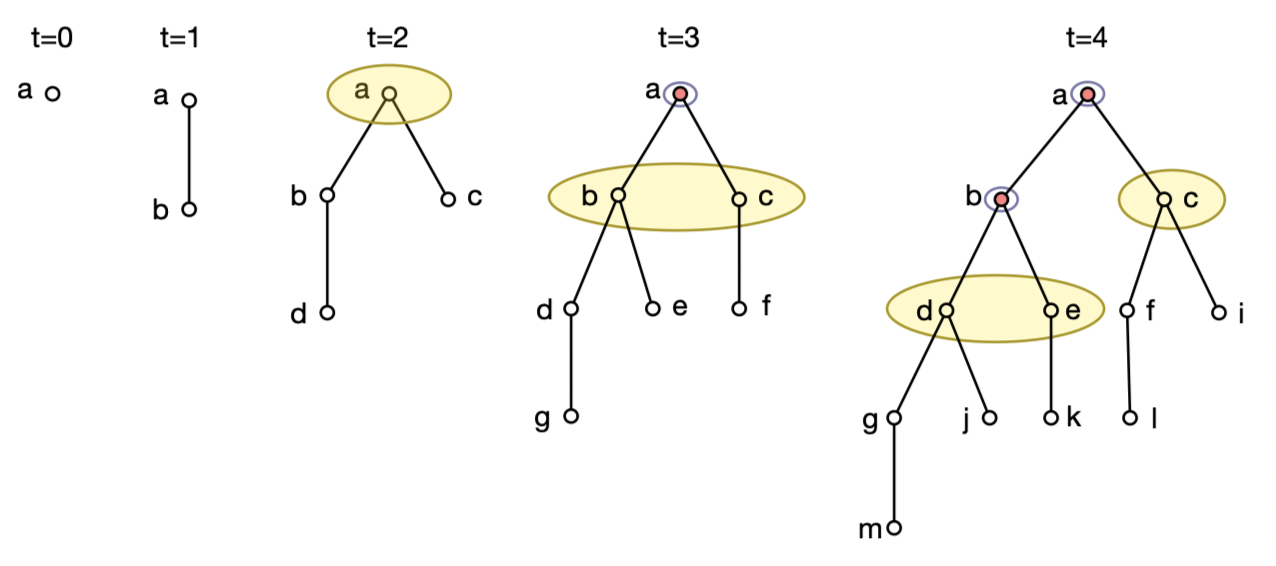}%
}
\vfill
\subfloat[At $t=4$, descending-time queries $c$.]{%
  \includegraphics[scale=.5]{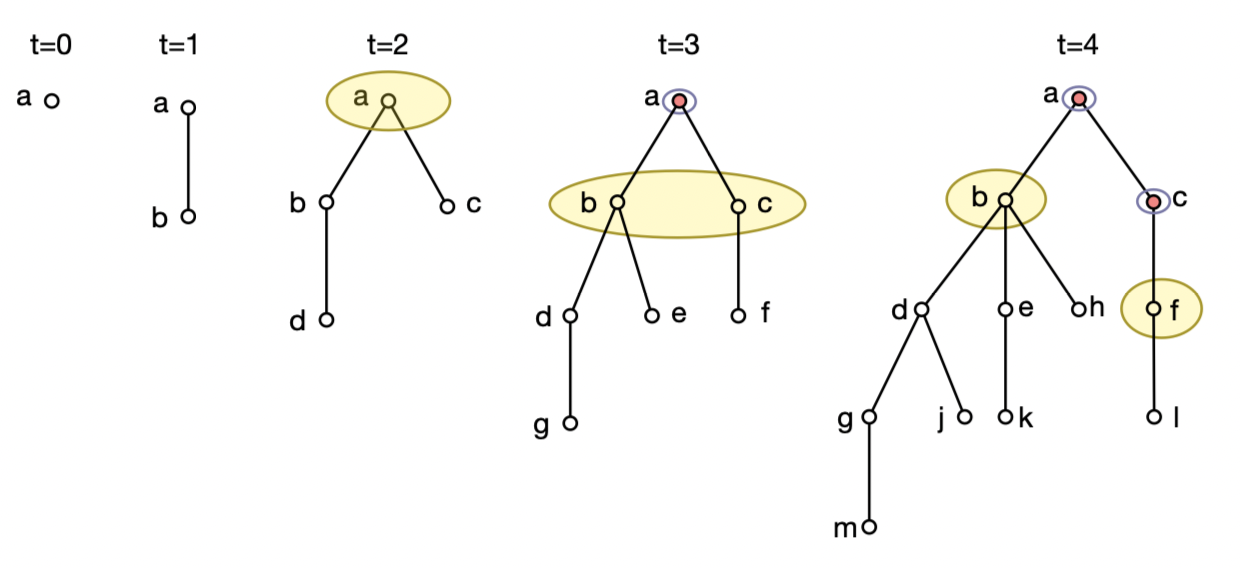}%
}
\caption{The example shows ascending-time and descending-time operating on the same transcript in a setting where all nodes have the same infection probability $p=.9999985$ and contact probability $q=1$. The infection process begins at $t=0$ and runs uninhibited for three steps. At $t=3$, both policies query $a$, and since $a$ is infected its children $b$ and $c$ both join the frontier. Node $b$ has time-of-arrival $\tau_b = 1$ while node $c$ has time-of-arrival $\tau_c = 2$. Therefore at $t=4$ ascending-time queries $b$ while descending-time queries $c$. We analyze these two choices in~\cref{thm:separation} to prove that, for this setting of paramters $p$ and $q$, descending-time has a strictly higher probability of containment.}
\label{fig:separation_pf}
\end{figure}

\Cref{fig:ascend_v_descend} provides an example that illustrates how the two policies ascending-time and descending-time operate. In~\cref{thm:separation} we prove that, for a certain setting of infection parameters, descending-time has a strictly higher probability of containment than ascending-time. Our analysis in this proof is centered around the processes illustrated in~\cref{fig:separation_pf}.


\begin{theorem} \label{thm:separation}
There is an instance in which two policies have different probabilities of containment.
\end{theorem}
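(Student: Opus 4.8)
The plan is to instantiate \cref{mainques} with $D_p$ and $D_q$ the point masses at $p = 0.9999985$ and $q = 1$, arrival time $k = 3$, and take $\A_1$ to be ascending-time and $\A_2$ to be descending-time. Write $\epsilon = 1-p$. Following the deferred-decisions framework, I would couple both policies to one common transcript, so the only randomness is which generated contacts are infected. Since $q = 1$ the contact structure is deterministic: the root $a$ acquires children $b$ (with $\tau_b = 1$) and $c$ (with $\tau_c = 2$) during the uninhibited phase, and by the first post-query round $b$ already heads a depth-two subtree of four potential infected nodes, while $c$ heads only a two-node chain. Both policies query $a$ at $t=3$ and reveal $\{b,c\}$, so they agree through $t=3$ and first diverge at $t=4$, where $\A_1$ queries $b$ and $\A_2$ queries $c$, exactly as in \cref{fig:separation_pf}.

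The engine of the proof is a structural dichotomy for the $q=1$ dynamics, phrased through the number $A_t$ of active infected nodes. If the tracer stabilizes an infected node in a round then $A_{t+1} = (A_t - 1) + \mathrm{Bin}(A_t - 1, p)$, whereas a wasted query on an uninfected node gives $A_{t+1} = A_t + \mathrm{Bin}(A_t, p)$. First I would record that a single infected chain keeps $A_t \le 2$ and is contained almost surely: with $p<1$ the chain terminates in geometric time, and the tracer, chasing its tip, catches up the instant a spawn fails. Conversely, any branching pushes $A_t$ to at least $3$, where the per-round drift $A_t p - 1 - p$ is strictly positive, so the process is supercritical and escapes containment with probability bounded below uniformly for $p$ near $1$. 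This is precisely the runaway phenomenon quantified in \cref{lem:f_delta}, which I would reuse to obtain an explicit lower bound on the escape probability.

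With this dichotomy the decisive event is $\{b\text{ uninfected}\}$, of probability $\epsilon$. On it the whole $b$-subtree is uninfected, so the only infected material is the young chain under $c$. The descending policy $\A_2$ queries $c$ at $t=4$, stabilizing it before it can spawn a second child, so the $c$-lineage stays a single chain and $\A_2$ contains almost surely. The ascending policy $\A_1$ instead wastes its $t=4$ query on the uninfected $b$; then $c$ remains active and spawns a second child, and when both of $c$'s first two children are infected (probability $\approx 1$) the count jumps to $A=4$, whence $\A_1$ fails with probability bounded below. Hence $\Pr[\A_2\text{ contains}] = \Theta(\epsilon)$. The symmetric event $\{c\text{ uninfected}\}$ gives \emph{no} matching advantage to $\A_1$: there the surviving $b$-subtree already has $A=4$ active infected nodes and is supercritical, so even the productive query of $b$ cannot save it and both policies fail. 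More generally I would show that for $\A_1$ to contain at least two suitably placed early nodes must be uninfected, so $\Pr[\A_1\text{ contains}] = O(\epsilon^2)$. Subtracting, $\Pr[\A_2\text{ contains}] - \Pr[\A_1\text{ contains}] = \Theta(\epsilon) - O(\epsilon^2) > 0$ once $\epsilon$ is small enough, which holds at $\epsilon = 1 - 0.9999985$.

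The main obstacle is converting this asymptotic-in-$\epsilon$ comparison into a rigorous inequality at the single fixed value of $p$. This needs two explicit, non-asymptotic estimates: a lower bound on the probability that two simultaneously active infected lineages escape containment, so that the $\Theta(\epsilon)$ advantage of $\A_2$ is genuinely bounded below, and a matching upper bound showing that containment under $\A_1$ forces at least two early uninfected nodes and is therefore $O(\epsilon^2)$. Both require controlling the \emph{global} branching process rather than just the configuration near the root, and I expect to obtain them from a drift/martingale argument in the spirit of \cref{lem:f_delta}; the specific constant $p = 0.9999985$ is then chosen so that the resulting bound $c_1 \epsilon > c_2 \epsilon^2$ holds with room to spare.
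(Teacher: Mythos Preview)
Your approach matches the paper's: same instance ($p=0.9999985$, $q=1$, $k=3$), same pair of policies, same pivotal divergence at $t=4$, same chain argument that descending-time contains almost surely on $\{a\text{ infected},\, b\text{ uninfected},\, c\text{ infected}\}$, and the same reliance on \cref{lem:f_delta} to certify that the remaining configurations are supercritical. The paper carries this out by tabulating the five outcomes for the infection statuses of $(a,b,c)$ and bounding each conditional containment probability explicitly, then subtracting.

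One place your bookkeeping slips: the claim $\Pr[\A_1\text{ contains}]=O(\epsilon^2)$ is false as written, since $\{a\text{ uninfected}\}$ alone (probability $\epsilon=1-p$) already forces containment for \emph{both} policies, so $\Pr[\A_1\text{ contains}]=\Theta(\epsilon)$ too and the subtraction $\Theta(\epsilon)-O(\epsilon^2)$ is not what you are computing. Even after conditioning on $a$ infected, the bound the runaway lemma gives on the all-infected outcome $u_5=\{a,b,c\text{ infected}\}$ is $\P_A(u_5)\le\delta^2$ (two independent supercritical subtrees), not $O(\epsilon^2)$, and this outcome carries weight $p^3\approx 1$. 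The paper's actual inequality is
\[
\P_D-\P_A \ge p^2(1-p)(1-2\delta)-\delta^2 p^3,
\]
so the competing scales are $\epsilon$ versus $\delta^2$, not $\epsilon$ versus $\epsilon^2$. The specific value $p=0.9999985$ is chosen so that, with $\delta=10^{-3}$, one has $pq>f(\delta)$ from \cref{lem:f_delta} and simultaneously $\epsilon\approx 1.5\times 10^{-6}>\delta^2=10^{-6}$. Your ``$c_1\epsilon>c_2\epsilon^2$'' framing would require the runaway lemma to deliver $\delta=O(\epsilon)$, which it does not at this $p$; you will need to track the $\delta$-dependence explicitly rather than folding everything into powers of $\epsilon$.
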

\begin{proof}
We provide an instance in which ascending-time and descending-time have different probabilities of containment. Let $p = 0.9999985$, let $q = 1$, and let $k = 3$. Consider the instance where for all nodes $v$, $p_v = p$ and $q_v = q$. For this instance, let $\P_A$ be the probability of containment for ascending-time, and let $\P_D$ be the probability of containment for descending-time. By analyzing the first few nodes each policy queries, we will show that $\P_D > \P_A$.

First let us review the infection process up until tracing begins at time $t = k$, as illustrated in~\cref{fig:separation_pf}. Note that, since $q = 1$, until tracing begins every node generates a new child at every step. At $t = 0$ the root $a$ is infected with probability $p$. At $t = 1$ $a$ generates child $b$. If $a$ is infected, $a$ infects $b$ with probability $p$. At $t = 2$ $a$ generates child $c$ and $b$ generates child $d$. If $a$ is infected, $a$ infects $b$ with probability $p$, and if $b$ is infected, $b$ infects $d$ with probability $p$. Let a node $v$ have time-of-arrival $\tau_v$. Therefore, $\tau_b = 1$, $\tau_c = 2$, and $\tau_d = 2$.

The proof analyzes five different infection status outcomes for the nodes $a$, $b$, and $c$, which partition the space of all outcomes, as summarized in~\cref{fig:tbl}. We define a failure parameter $\delta = .001$. For a node $v$, $E_v$ is the event that $v$ is infected. For each outcome $u$, we bound the probability of containment for both policies conditional on outcome $u$, where $\P_A(u)$ is the probability of containment for ascending-time and $\P_D(u)$ is the probability of containment for descending-time. We then upper bound $\P_A$ and lower bound $\P_D$ by computing the average probability of containment over the outcomes $u$, weighted by the probability $\Pr(u)$ that the outcome $u$ occurs.

The first outcome is $u_1 = \lnot E_a$, which occurs with probability $1 - p$. Since the root $a$ is not infected, the infection is always contained, so $\P_A(u_1) = \P_D(u_1) = 1$. For all the following outcomes, the root $a$ is infected, so $E_b$ and $E_c$ are independent, and each occurs with probability $p$. For the second outcome, $u_2 = E_a \land \lnot E_b \land \lnot E_c$, neither $b$ nor $c$ are infected, so the infection is always contained, and therefore $\P_A(u_2) = \P_D(u_2) = 1$ as well.~\Cref{lem:u_3,lem:u_4,lem:u_5} bound the probabilities of containment for the remaining three outcomes.

\begin{figure} [h]
\begin{center}
\begin{tabular}{ ||c|c|c|c|| } 
 \hline
$u$ & $\Pr(u)$ & $\P_A(u)$ & $\P_D(u)$ \\
 \hline
 \hline
 $\lnot E_a$ & $1 - p$ & $1$ & $1$ \\ 
 \hline
 $E_a \land \lnot E_b \land \lnot E_c$ & $p(1-p)^2$ & 1 & 1 \\
 \hline
 $E_a \land \lnot E_b \land E_c$ & $p^2 (1 - p)$ & $\leq \delta$ & 1 \\
 \hline
 $E_a \land E_b \land \lnot E_c$ & $p^2 (1 - p)$ & $\leq \delta$ & $\leq \delta$ \\
 \hline
 $E_a \land E_b \land E_c$ & $p^3$ & $\leq \delta^2$ & $\leq \delta$ \\
 \hline
\end{tabular}
\caption{ By considering the infection status outcomes for the first few nodes in the tree, we can bound the probabilities of containment for ascending-time and descending-time. Here $u$ is an outcome describing the infection status of nodes $a, b, c$ and $\Pr(u)$ is the probability that outcome $u$ occurs. Given the outcome $u$, $\P_A(u)$ is the probability that ascending-time contains the infection, and $\P_D(u)$ is the probability that descending-time contains the infection.}
\label{fig:tbl}
\end{center}
\end{figure}

Using the bounds in~\cref{fig:tbl}, we can lower bound $\P_D$ and upper bound $\P_A$. 
\begin{align*}
\P_D &\geq (1 - p) + p(1 - p)^2 + p^2 (1 - p) \\
\P_A &\leq (1 - p) + p(1 - p)^2 + 2 \delta p^2 (1 - p) + \delta^2 p^3
\end{align*}

Therefore,
\begin{align*}
\P_D - \P_A &\geq p^2 (1 - p) - 2 \delta p^2 (1 - p) - \delta^2 p^3 \\
&> 4.97 \times 10^{-7}
\end{align*}
Thus there is an instance for which descending-time has a strictly higher probability of containment than ascending-time.
\end{proof}

The remainder of this section provides analysis for the third, fourth, and fifth outcomes in~\cref{fig:tbl}. To help with this process, we first review the tracing process for both policies, which is illustrated in~\cref{fig:separation_pf}. Recall that for all $t \geq 3$, first the tracer queries a node from the frontier, and then a round of the infection process runs. For a node $v$, let $A(v)$ be the time at which ascending-time queries $v$, and let $D(v)$ be the time at which descending-time queries $v$. At $t = 3$, the root $a$ is the only node available to query. Therefore, both ascending-time and descending-time query $a$ at $t = 3$, so $A(a) = D(a) = 3$. If $a$ is infected, at time $t = 4$ both policies have frontier $\{ b, c \}$. Since $\tau_b < \tau_c$, ascending-time queries $b$ and descending-time queries $c$, so $A(b) = 4$ and $D(c) = 4$. The following lemmas analyze the implications of this difference between the two policices.

\begin{lemma} \label{lem:u_3}
For the outcome $u_3 = E_a \land \lnot E_b \land E_c$, $\P_A(u_3) \leq \delta$ and $\P_D(u_3) = 1$.
\end{lemma}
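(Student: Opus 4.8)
The plan is to prove the two bounds separately, and the whole argument turns on a single observation about \emph{when} each policy first queries $c$. Since $c$ is revealed at $t=3$ (when $a$ is queried) and $\tau_b = 1 < 2 = \tau_c$, descending-time queries $c$ already at $t=4$ (so $D(c)=4$), whereas ascending-time first ``wastes'' step $t=4$ on the uninfected node $b$ and only queries $c$ at $t=5$ (so $A(c)=5$). Because $q=1$, a node created in round $s$ and queried at step $t$ is active in rounds $s+1,\dots,t-1$ and hence reveals exactly $t-s-1$ children; so descending reveals $\chi=1$ child of $c$ while ascending reveals $\chi=2$. This one-step difference is what separates a bounded (chain) process from a runaway (branching) one. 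Throughout I use the transcript / deferred-decisions viewpoint, and the fact that, conditioned on $u_3$, the randomness inside the subtree rooted at $c$ is independent of the fixed statuses of $a$ and $b$.

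For $\P_D(u_3)=1$, I would show by induction on the steps that after $t=4$ the set of active infected nodes under descending-time is always a single chain $c, c_3, c_4, \dots$, where the $i$-th node is created in round $2+i$ and queried at step $4+i$ and therefore reveals exactly one child. Since descending-time always queries the newest frontier node and $\tau_b=1$ is smaller than the arrival time of every chain node, $b$ is never preferred; each queried chain node reveals exactly one child, which is infected — and hence becomes the next chain tip — with probability $p$. The chain thus terminates at the first transmission failure, an event of probability $1-p>0$ at each link, so its length is dominated by a geometric random variable and is finite almost surely. Once the chain dies, the frontier consists only of uninfected nodes (namely $b$ and the single uninfected chain-end), which are cleared in finitely many further queries, so the infection is contained with probability $1$.

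For $\P_A(u_3)\le\delta$, the key step is a coupling showing that, conditioned on $c$ being infected, the evolution of the $c$-subtree under ascending-time is distributed exactly as a fresh instance of the problem with $k=3$, root $c$, run under ascending-time. I would first argue that from $t=5$ onward the frontier contains only nodes of the $c$-subtree: $b$ is uninfected so querying it reveals nothing and no descendant of $b$ ever enters the frontier, while $a$'s only other child is $c$; hence for $t\ge 5$ ascending-time on the full frontier coincides with ascending-time on the $c$-subtree alone. Shifting absolute time by $-2$ maps $c$ (born in round $2$, first queried at $t=5$) to a fresh root born in round $0$ and first queried at step $k=3$, and maps $c$'s children $c_3,c_4$ (arrivals $3,4$) to the fresh root's two children (arrivals $1,2$); since the shift is monotone it preserves the ascending order, so the two processes are identically distributed. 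The parameters are chosen precisely so that $pq=p>f(\delta)$ for $\delta=0.001$, so \cref{lem:f_delta} applies to this fresh instance under ascending-time and bounds its containment probability by $\delta$. Decomposing the fresh instance over whether its root is infected gives $(1-p)+p\,\P_A(u_3)\le\delta$, and since $\delta\le 1$ this rearranges to $\P_A(u_3)\le(\delta-(1-p))/p\le\delta$.

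I expect the coupling in the last paragraph to be the main obstacle: one must verify carefully that no node outside the $c$-subtree ever re-enters the frontier (so ascending-time is never ``distracted''), that the internal randomness of the $c$-subtree is genuinely fresh after conditioning on $u_3$, and that the constant time-shift aligns the entire query schedule, including tie-breaking. The contrast with descending-time — where the analogous shift yields effective $k=2$ rather than $k=3$, placing us outside the hypothesis $k\ge 3$ of \cref{lem:f_delta} and instead in the subcritical chain regime analyzed above — is exactly what makes the two policies behave differently on this outcome.
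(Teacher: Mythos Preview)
Your proposal is correct and follows essentially the same approach as the paper: for descending-time you show the $c$-subtree is traced as a single chain that terminates almost surely, and for ascending-time you reduce the $c$-subtree (via the time shift by $-2$) to a fresh $k=3$ instance and invoke \cref{lem:f_delta} (packaged in the paper as \cref{lem:check_pq}). Your explicit decomposition $(1-p)+p\,\P_A(u_3)\le\delta \Rightarrow \P_A(u_3)\le\delta$ to handle the conditioning on $E_c$ is in fact more careful than the paper, which applies the bound to the subtree directly without isolating this step.
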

\begin{proof}
Since $b$ is not infected, the frontier for ascending-time at time $t = 5$ is $\{ c \}$. Therefore, ascending-time queries $c$ at time $t = 5$, so $A(c) = 5$. Since $a$ is infected, $c$ is infected with probability $p$, and since $\tau_c = 2$, $c$ has been present for $3$ timesteps already. Therefore $c$ is the root of a subtree with the same infection parameters $p$ and $q$, and with tracing delay $k = 3$. Thus, by~\cref{lem:check_pq}, ascending-time contains the infection within the subtree rooted at $c$ with probability at most $\delta$, so $\P_A(u_3) \leq \delta$.

Now consider the tracing process for descending-time. Descending-time queries $c$ at time $t = 4$, at which point $c$ has exactly one child, which is a leaf. Since $c$ is infected, the child is added to the frontier. Then one round of the infection process runs, and the child of $c$ generates a leaf of its own. At time $t = 5$, the frontier for descending-time contains exactly $b$ and the child of $c$. Since the child of $c$ by definition has a larger arrival time than $c$, and since $c$ has a larger arrival time than $b$, at time $t = 5$ descending-time queries the child of $c$. If the child of $c$ is infected, its child (which now has a single leaf of its own) is added to the frontier, and the process repeats. Thus, descending-time recursively queries the descendents of $c$, which create a long chain, until it reaches a descendent which is not infected, which since $p < 1$, occurs with probability $1$. Therefore, with probability $1$, time descending stabilizes all infected nodes within the subtree rooted at $c$, so since $b$ is not infected, $\P_D(u_3) = 1$.
\end{proof}

\begin{lemma} \label{lem:u_4}
For the outcome $u_4 = E_a \land E_b \land \lnot E_c$, $\P_A(u_4) \leq \delta$ and $\P_D(u_4) \leq \delta$.
\end{lemma}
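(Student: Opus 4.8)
The plan is to reduce the analysis of the outcome $u_4 = E_a \land E_b \land \lnot E_c$ to a single fresh subtree rooted at the infected child $b$, and then invoke~\cref{lem:check_pq}. Since $c$ is uninfected under $u_4$, it can never transmit the infection, so every infected node other than $a$ lies in the subtree rooted at $b$; because both policies stabilize $a$ when they query it at $t=3$, containment of the whole infection is equivalent to containment of the subtree rooted at $b$. The key quantity is therefore the \emph{effective delay} of $b$, namely the gap between $b$'s time-of-arrival $\tau_b = 1$ and the step at which the policy first queries $b$, since an infected node first queried with delay $\ell$, together with the subtree that grew uninhibited beneath it, is distributed exactly as the root of a fresh instance with tracing delay $\ell$.

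First I would pin down the query times, as in~\cref{fig:separation_pf}. For ascending-time, at $t=4$ the frontier is $\{b,c\}$ and $\tau_b < \tau_c$, so $A(b)=4$ and the effective delay is $4 - \tau_b = 3$. Thus the subtree rooted at $b$ is distributed as a fresh instance with parameters $p,q$ and delay $k=3$, and~\cref{lem:check_pq} gives containment probability at most $\delta$. For descending-time, at $t=4$ the policy instead queries the later-arriving node $c$; since $c$ is uninfected it reveals no children, so the frontier at $t=5$ is again $\{b\}$ and $D(b)=5$, giving effective delay $5 - \tau_b = 4$. A delay of $4$ only gives the infection more time to grow than a delay of $3$, so its containment probability is no larger; hence descending-time also contains with probability at most $\delta$.

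Two points need care. The first is the interaction with the uninfected node $c$: under ascending-time $c$ remains on the frontier after $t=4$ and is queried at some later step (possibly via a tie at arrival time $2$ with a child of $b$), consuming one tracer step without stabilizing or revealing anything in $b$'s subtree. I would argue, by coupling the $u_4$ process against the idealized fresh-instance process on the same transcript, that this single wasted query can only postpone the queries devoted to $b$'s subtree and therefore cannot push the containment probability above the fresh-instance value of $\delta$; the argument is insensitive to how ties are broken. The second point is the monotonicity invoked for descending-time, namely that containment probability is nonincreasing in the effective delay.

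The main obstacle is making this monotonicity rigorous. I would prove, via a coupling on the deferred-decision transcript, that replaying a delay-$\ell$ subtree inside a larger (delay-$(\ell+1)$, or wasted-step) process yields a pointwise larger set of active infected nodes at every step, so that any step on which the idealized process has already failed is also a failure for the actual process. Once this ``more uninhibited growth never helps the tracer'' statement is in hand, both bounds $\P_A(u_4) \le \delta$ and $\P_D(u_4) \le \delta$ follow immediately from~\cref{lem:check_pq}.
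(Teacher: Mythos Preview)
Your argument is correct and runs parallel to the paper's. For descending-time you and the paper argue identically that $D(b)=5$ and reduce to the subtree at $b$; note, though, that your monotonicity-in-delay step is unnecessary, since \cref{lem:check_pq} is already stated for all $k\ge 3$ and so applies directly with effective delay $4$.

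For ascending-time there is a small difference in which subtree you pivot on. You use the subtree rooted at $b$, queried at $A(b)=4$ with delay exactly $3$, and must then account for the single wasted step on $c$. The paper instead descends one more level and uses the subtree rooted at $d$ (the first child of $b$, with $\tau_d=2$): since $b$ is infected under $u_4$, $d$ is infected with probability $p$, which matches the fresh-instance hypothesis of \cref{lem:check_pq} exactly, and one only needs the looser statement $A(d)\ge 5$. In practice both routes rely on the same implicit ``queries spent outside the subtree can only hurt the tracer'' monotonicity (the paper's tracer may still spend steps on $c$ and on $b$'s other child $e$); the paper simply leaves this unspoken, whereas you make it explicit via your coupling plan. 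Either pivot works, and your version is arguably cleaner about what is actually being assumed.
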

\begin{proof}
First let us consider the tracing process for ascending-time. Since $b$ is infected, at the start of $t = 5$ the frontier for ascending-time is $\{ c, d, e \}$. Therefore, ascending-time queries $d$ at some time $t \geq 5$, so $A(d) \geq 5$. Since $b$ is infected, $d$ is infected with probability $p$, and since $\tau_d = 2$, $d$ has been present for $3$ timesteps already. Therefore, by the same argument as before, by~\cref{lem:check_pq} ascending-time contains the infection within the subtree rooted at $d$ with probability at most $\delta$, so $\P_A(u_4) \leq \delta$. 

Now consider the tracing process for descending-time. Since $c$ is not infected, at time $t = 5$ the frontier for descending-time is $\{ b \}$. Therefore descending-time queries $b$ at time $t = 5$, so $D(b) = 5$. Since $b$ has an infected parent, and has been present for at least $3$ timesteps, by~\cref{lem:check_pq} descending-time contains the infection within the subtree rooted at $b$ with probability at most $\delta$, so $\P_D(u_4) \leq \delta$.
\end{proof}

\begin{lemma} \label{lem:u_5}
For the outcome $u_5 = E_a \land E_b \land E_c$, $\P_A(u_5) \leq \delta^2$ and $\P_D(u_5) \leq \delta$.
\end{lemma}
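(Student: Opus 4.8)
The plan is to mirror the structure of the proofs of \cref{lem:u_3,lem:u_4}, but now exploit that under $u_5$ \emph{both} children $b$ and $c$ of the root are infected, so that each policy must contain two separate subtrees, each of which has been allowed to grow uninhibited for at least three rounds. The single factor of $\delta$ for descending-time and the squared factor $\delta^2$ for ascending-time will come from how many of these subtrees each policy leaves in a runaway state.

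First I would handle descending-time. As in \cref{lem:u_3}, descending-time queries $a$ at $t=3$ and then $c$ at $t=4$ (since $\tau_c = 2 > \tau_b = 1$), after which it recursively chases the newest descendants down $c$'s branch. The key observation is that every node in $c$'s branch has arrival time at least $2 > \tau_b$, while $b$'s descendants are never revealed (as $b$ is not yet queried), so the frontier always consists of $b$ together with the current tip of $c$'s chain. Hence descending-time never queries $b$ until $c$'s branch is fully resolved, which cannot happen before $t = 6$. Thus $b$, an infected node that arrived at $\tau_b = 1$, is first queried at some time $t \geq 6$, i.e. after being present for at least $3$ rounds. By \cref{lem:check_pq} the subtree rooted at $b$ is then contained with probability at most $\delta$, and since containment of the whole tree requires containing this subtree, $\P_D(u_5) \leq \delta$.

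For ascending-time, I would isolate the two independent runaway subtrees $T_b$ and $T_c$ rooted at $b$ and $c$. Ascending-time queries $a$ at $t=3$ and $b$ at $t=4$ (delay $3$), and queries $c$ at some $t \geq 5$ (delay $\geq 3$); under $u_5$ both $b$ and $c$ are infected, so each roots a subtree that has grown uninhibited for at least three rounds, and by \cref{lem:check_pq} each is contained with probability at most $\delta$. The remaining task is to combine these into $\P_A(u_5) \leq \delta^2$. Since $b$ and $c$ are siblings, the infection and contact variables inside $T_b$ and $T_c$ are independent conditioned on $u_5$. To handle the fact that a single tracer interleaves its queries between the two subtrees, I would use a monotonicity/coupling argument: granting the tracer a second, dedicated query per round can only improve containment, and under such a tracer the two subtrees evolve as independent delay-$3$ instances. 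Formally, if the real ascending tracer contains $T_b$ (resp. $T_c$) using only the rounds it actually spends there, then a dedicated full-speed delay-$3$ tracer, which acts at least as early and often, also contains it; these two dedicated containment events depend on disjoint randomness, are therefore independent, and each has probability at most $\delta$ by \cref{lem:check_pq}. Multiplying yields $\P_A(u_5) \leq \delta^2$.

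The main obstacle is this last independence step: because one tracer serves both subtrees, the events that ascending contains $T_b$ and that it contains $T_c$ are coupled through the tracer's data-dependent choice of where to spend each query, so they cannot be factored directly. The crux is therefore justifying the monotonicity claim that extra or earlier queries never hurt containment, which is what licenses replacing the shared tracer by two independent dedicated tracers at the cost of only strengthening the tracer. Everything else---that both subtrees are infected-rooted with delay $\geq 3$, and the invocation of \cref{lem:check_pq}---is routine and parallels \cref{lem:u_3,lem:u_4}.
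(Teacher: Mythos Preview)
Your proposal is essentially correct and follows the same overall strategy as the paper: identify two disjoint subtrees, each the root of a delay-$\geq 3$ instance, apply \cref{lem:check_pq} to each, and multiply to get $\delta^2$ for ascending-time; for descending-time, use the chain argument from \cref{lem:u_3} on $T_c$ and then \cref{lem:check_pq} on $T_b$.

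There are two differences worth noting. First, for the ascending-time bound the paper decomposes into the subtrees rooted at $c$ and $d$ (both with $\tau=2$ and both first queried at some $t\geq 5$), whereas you use $b$ and $c$. Both decompositions give two disjoint delay-$\geq 3$ subtrees with independent infection randomness, so either works; the paper's choice of $d$ rather than $b$ is a matter of taste.

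Second, you explicitly flag and address the shared-tracer obstacle to independence via a monotonicity/coupling argument, while the paper simply asserts that the two containment events are independent because ``their subtrees of infection are independent.'' You are being more careful here than the paper: the infection randomness in the two subtrees is indeed independent, but the tracer's schedule in each subtree depends on what happens in the other, so the two \emph{containment} events are not literally independent. Your proposed fix---compare to a stronger tracer with a dedicated full-speed query stream in each subtree, note that each dedicated stream is then a function only of its own subtree's randomness, and invoke \cref{lem:check_pq} on each---is the natural way to make this rigorous, and it is exactly the step the paper leaves implicit. The monotonicity claim (earlier and more frequent queries can only shrink the pruned infection tree under the transcript coupling) is what carries the argument; once that is established, the two dedicated-containment events are genuinely independent and each has probability at most $\delta$.
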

\begin{proof}
First consider the tracing process for ascending-time. Since $b$ is infected, at the start of $t = 5$ the frontier for ascending-time is $\{ c, d, e \}$. Therefore, $A(c) \geq 5$ and $A(d) \geq 5$. Both $c$ and $d$ have infected parents, so their subtrees of infection are independent. Therefore, by~\cref{lem:check_pq} ascending-time contains the infection within the subtree rooted at $c$ with probability at most $\delta$, and independently, contains the infection within the subtree rooted at $d$ with probability at most $\delta$. Thus $\P_A(u_5) \leq \delta^2$. 

Now consider the tracing process for descending-time. Recall that at the start of $t=4$ the frontier for descending-time is exactly $\{ b, c \}$, and that descending-time queries $c$ at time $t = 4$. Therefore $D(b) \geq 5$. By the same argument as given in~\cref{lem:u_3}, descending-time contains the infection within the subtree rooted at $c$ with probability $1$. However, since $D(b) \geq 5$ and $\tau_b = 2$, by~\cref{lem:check_pq} descending-time contains the infection within the subtree rooted at $b$ with probability at most $\delta$, so $\P_D(u_5) \leq \delta$.
\end{proof}

Finally, we show the following supporting lemma which proves that the chosen infection and contagion parameters are large enough such that neither descending-time nor ascending-time are likely to contain the infection if a node has already been active for a few steps.
\begin{lemma} \label{lem:check_pq}
Fix a policy $P$. Let $\delta = .001$. Let $p = 0.9999985$, $q = 1$, and suppose $k \geq 3$. Suppose each node $v$ has infection probability $p$ and contact probability $q$. Then with probability at least $1 - \delta$, policy $P$ does not contain the infection.
\end{lemma}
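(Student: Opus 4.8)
The plan is to recognize that \cref{lem:check_pq} is simply the instantiation of the general runaway bound in \cref{lem:f_delta} at the specific parameter values used throughout this section, so the entire task reduces to checking that the hypothesis $pq > f(\delta)$ of \cref{lem:f_delta} is satisfied. Since every node has $p_v = p = 0.9999985$ and $q_v = q = 1$, we trivially have $p_v \geq p$ and $q_v \geq q$, and the assumption $k \geq 3$ matches the hypothesis of \cref{lem:f_delta}. Thus once the arithmetic inequality is confirmed, \cref{lem:f_delta} immediately yields that $P$ fails to contain the infection with probability at least $1 - \delta$.

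First I would compute the constant $h$ for $\delta = 0.001$. Since $\ln(4/\delta)/2 = \ln(4000)/2 \approx 4.15 < 128$, we get $\max(128, \ln(4/\delta)/2) = 128$, so
\[
h = 2 \lceil 128 \rceil + 16 = 272.
\]
Next I would evaluate $f(\delta) = \max\bigl((1 - \delta/2)^{1/h},\, 1/2\bigr)$. Because $(1-\delta/2)^{1/h}$ is extremely close to $1$, the maximum is attained by the first term, so it suffices to compare $pq$ with $(1-\delta/2)^{1/h}$, or equivalently, raising both sides to the power $h$, to verify
\[
(pq)^{h} = (0.9999985)^{272} > 0.9995 = 1 - \delta/2.
\]

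The only real work is confirming this last inequality, and it is genuinely tight: a first-order estimate gives $(0.9999985)^{272} \approx 1 - 272\,(1.5\times 10^{-6}) \approx 0.99959$, which exceeds $0.9995$ by only about $9\times 10^{-5}$. I expect this narrow margin to be the main (and essentially only) obstacle, since $p$ was deliberately chosen near the threshold at which the separation in \cref{thm:separation} can be witnessed; care is needed to establish the bound rigorously rather than merely numerically. A clean way to do this is to lower bound $\ln(0.9999985)$ via the elementary inequality $\ln(1-x) \geq -x/(1-x)$ with $x = 1.5\times 10^{-6}$, multiply by $272$, and then apply $e^{y} \geq 1+y$ to conclude $(0.9999985)^{272} \geq 0.99959 > 0.9995$. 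Once $pq > f(\delta)$ is established in this way, \cref{lem:f_delta} applies directly and the conclusion follows with no further argument.
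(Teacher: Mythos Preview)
Your proposal is correct and follows essentially the same approach as the paper: both reduce to \cref{lem:f_delta}, compute $h=272$, and verify the single numerical inequality $pq > f(\delta)$. The only cosmetic difference is that the paper compares $(0.9995)^{1/272}$ directly to $0.9999985$, whereas you raise both sides to the $272$nd power and compare $(0.9999985)^{272}$ to $0.9995$; your use of $\ln(1-x)\geq -x/(1-x)$ and $e^{y}\geq 1+y$ makes the narrow-margin numerical check rigorous rather than asserted.
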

\begin{proof}
Following~\cref{lem:f_delta}, we simply need to check that $pq > f(\delta)$.
\begin{align*}
f(\delta) &= \max((1 - \delta/2)^{1/(2 \lceil \max(128, \ln(4/\delta) / 2) \rceil + 16))}, 1/2) \\
&= \max((1 - .001/2)^{1/272}, 1/2) \\
&= (1 - .0005)^{1/272} \\
&\leq 0.9999982 \\
&< 0.9999985 \\
&= pq
\end{align*}
Therefore, any policy contains the infection with probability at most $\delta$.
\end{proof}

\section{Time-of-Arrival Heuristics} \label{section:arrival}
As introduced in the previous section, there are two obvious policies for prioritizing nodes by time-of-arrival. The \textit{descending-time} policy prioritizes nodes in order of descending time-of-arrival, and the \textit{ascending-time} policy prioritizes nodes based on ascending time-of-arrival. The main question is, which policy has a higher probability of containment? 

The previous section provided a single setting of contagion parameters $p$ and $q$ in which descending-time provably has a higher probability of containment than ascending-time. In this section we explore the performance of both policies across a wide range of contagion parameters via computational experiments. Our main finding is that each of the two policies commands a substantial region of the parameter space in which it enjoys a higher probability of containment than the other. These computational experiments extend the conclusions of the previous section by demonstrating numerous instances in which policy choice affects the probability of containment. Moreover, since neither policy has the higher probability of containment in all instances, any comparison between these two policies must take into account the contagion parameters. 

Finally, our results qualitatively suggest that a trade-off between $p$ and $q$ may define the boundaries of these regions of dominance. Further characterizing these regions is an intriguing direction for future work.

\paragraph{Simulation overview.} 
Our computational experiments focus on the simple setting where every node is governed by the same infection parameters $p, q \in [0, 1]$. As a result, nodes differ only by time-of-arrival. Our simulation implements the model described in~\cref{section:introduction}, where $k = 3$, $D_p$ is the constant distribution on $p$, and $D_q$ is the constant distribution on $q$. Therefore an instance is defined by the pair $(p, q)$. As defined in the model from~\cref{section:introduction}, the infection is not contained if an infinite number of nodes become infected. Since checking this condition is intractable, for the purposes of our simulation we redefine containment in terms of a constant $Z_C = 10$; the infection is not contained if more than $Z_C$ nodes are active and infected. If the tracer stabilizes every infected node before this threshold is reached, then the infection is contained. 

Even with such a constraint, the infection tree may grow quite large before either of the above two terminating conditions is reached. To manage the size of the tree, our implementation only includes nodes in the tree that could at some point enter the frontier, which are exactly the infected nodes and their children. Even so, there are still instances in which this tree could grow very large.\footnote{An interested reader can return to the proof of~\cref{thm:separation} and the corresponding illustration in~\cref{fig:separation_pf} for such an example. Suppose the tracer follows the descending-time policy. After stabilizing $A$, the tracer stabilizes $C$ and then explores the subtree rooted at $C$. As explained in the proof, the descending-time policy constrains the subtree of $C$ to a single chain of nodes, which extends by one node at each step. The tracer stabilizes infected nodes along the chain until they reach the first uninfected node. As is also explained in the proof, at any step during this process, only the last two nodes in the chain are active. Now suppose that $B$ is uninfected. Then any active infections are in the chain rooted at $C$, of which there are at most two, so the threshold $Z_C$ is never reached. In this case, tracing only terminates once the tracer finds the first uninfected node in the chain, thus stabilizing all infected nodes. At this point the chain has expected length $1/(1-p)$, so for $p$ close to 1 this results in a very large tree.} Therefore, we limit the size of the tree to $Z_T = 1000$ nodes. We will argue below that this has only negligible effects on the computational results.

A \textit{trial} is a single run of the simulation and terminates in one of the following three states:
\begin{enumerate}[label=(\roman*)]
	\item \textit{The infection is contained: } All infected nodes are stabilized.
	\item \textit{The infection is not contained: } The number of active infections exceeds $Z_C$.
	\item \textit{The trial did not converge: } The number of nodes in the tree exceeds $Z_T$ before either $(i)$ or $(ii)$ occur.
\end{enumerate}

Nearly all trials terminate in states $(i)$ or $(ii)$; out of the approximately $2.88 \times 10^{11}$ trials run in total for all the computational experiments we present, only $87$ trials terminated in state $(iii)$.

For a fixed instance $(p, q)$, we define a policy's probability of containment to be the probability that a trial terminates in state $(i)$. A policy's \textit{observed} probability of containment for a given instance over a series of trials is the fraction of trials which terminate in state $(i)$. Our goal in the computational experiments that follow is to determine, for a given instance, which policy has the higher probability of containment based on each policy's observed probability of containment over a series of trials.

\paragraph{Choosing parameters $Z_C$ and $Z_T$.} To choose the parameter $Z_C$, we ran $2.5 \times 10^5$ trials for each of the descending-time and ascending-time policies in the setting where $D_p = \Unif(0, .75)$ and $D_q$ is the constant distribution on $1$. These trials were run with a slightly different set-up: instead of tracing beginning in round $k=3$, each trial was initialized with two roots and tracing began immediately thereafter. We found that every trial which reached $10$ active infections also reached $50$ active infections. We set $Z_C = 10$.

We set $Z_T = 1000$. Given the miniscule fraction of trials that terminated in state $(iii)$, it seems unlikely that increasing $Z_T$ would affect our results.

\begin{figure*}
        \centering
        \begin{subfigure}[b]{0.4\textwidth}
            \centering
            \includegraphics[width=\textwidth]{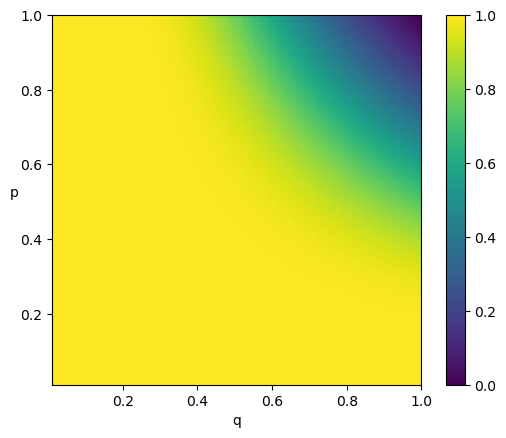}
            \caption[]%
            {{\small Observed probability of containment: ascending-time}}    
            \label{fig:time_ascend_plot}
        \end{subfigure}
        \hfill
        \begin{subfigure}[b]{0.4\textwidth}  
            \centering 
            \includegraphics[width=\textwidth]{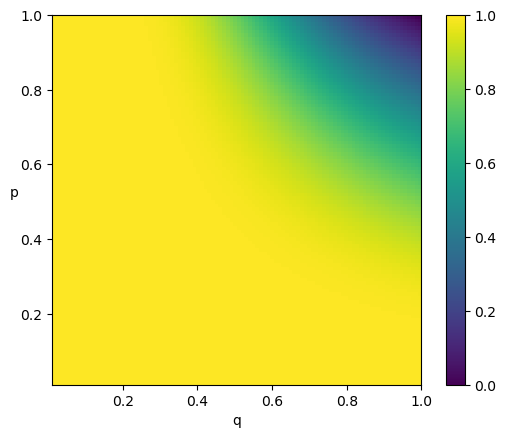}
            \caption[]%
            {{\small Observed probability of containment: descending-time}}    
            \label{fig:time_descend_plot}
        \end{subfigure}
        \\
        \begin{subfigure}[b]{0.4\textwidth}   
            \centering 
            \includegraphics[width=\textwidth]{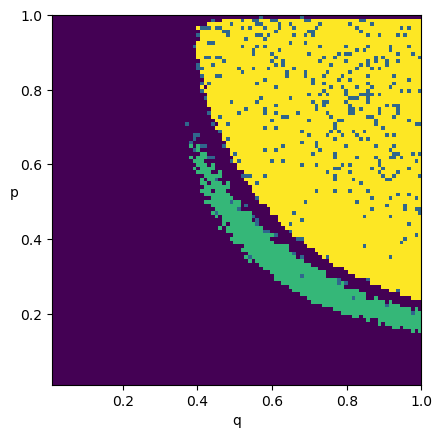}
            \caption[]%
            {{\small Regions of dominance}}    
            \label{fig:diff_asc_plot}
        \end{subfigure}
        \caption[]%
        {\small Plots (a) and (b) show the observed probability of containment for ascending-time and descending-time, respectively, from our first round of computational experiments. Both policies contain the infection with high probability when at least one of $p$ or $q$ is low. When $p$ and $q$ are both large, the observed probability of containment decreases. Plot (c) shows regions where the two policies have significantly different probabilities of containment. In the yellow region,  descending-time is significantly better, and in the green region time ascending-time is significantly better. The purple and blue points indicate instances where we do not have sufficient confidence to state which policy is better.} 
        \label{fig:arrival}
    \end{figure*}

\paragraph{Computational experiments, part 1.} Our computational experiments compare the probabilities of containment for descending-time and ascending-time as the infection probability $p$ and the contact probability $q$ range from $.01$ to $1$ in increments of $.01$. Since there are $100$ possible values each of $p$ and $q$, this makes for $10^4$ instances in total. For each instance $(p, q)$, we run $N = 7.5 \times 10^6$ trials for each policy. The observed probabilities of containment across the entire parameter space are displayed in plots (a) and (b) in~\cref{fig:arrival}.

\paragraph{Results, part 1.}
Plots in (a) and (b) in~\cref{fig:arrival} summarize the results of our experiments. Intuitively, as $p$ and $q$ increase, the observed probability of containment decreases for both policies. For example, when at least one of $p$ or $q$ is at most $0.4$, the smallest observed probability of containment for ascending-time is $0.875$ and for descending-time is $0.902$. The observed probability of containment drops off quickly when both $p$ and $q$ are large. When $p=q=.9$, the observed probability of containment for ascending-time is $0.231$ and for descending-time is $0.293$; when $p=q=.95$, the observed probability of containment for descending-time is $0.108$ and for descending-time is $0.148$. 

In comparing the two policies, we find that as $p$ and $q$ both grow large, descending-time has the higher observed probability of containment in many instances, and often by large margins. Even in instances where ascending-time exhibits the higher observed probability of containment, the margins are too small to make any claims of statistical significance. It is therefore consistent with this first round of computational experiments that descending-time might always be the better policy. Could there be any instance where ascending-time has the higher probability of containment?

\paragraph{Computational experiments, part 2.} 
To investigate this further, we run a second round of computational experiments, in which we focus solely on instances where the absolute difference between the observed probabilities of containment for the two policies is above a fixed threshold. For these instances, we run a second round of trials. We choose the number of trials to run for a given instance so that, if the difference in the observed probabilities of containment from the first round is approximately maintained over the second round, we will have run enough trials to make claims of statistical significance. As a result, the number of trials run in the second round for a given instance is a function of the absolute difference between the observed probabilities for the two policies in the first round. 

Based solely on the trials run in the second round, we compute the observed probabilities of containment for the two policies. We say that a policy \textit{dominates} an instance if we determine with confidence at least $1/2$ that it has a higher probability of containment than the other. Instances where ascending-time dominates are colored green, and instances where descending-time dominates are colored yellow. All other instances are colored purple.

To describe the process for the second round of trials more formally, for an instance $(p, q)$ let $\mathcal{P}_A(p,q)$ and $\mathcal{P}_D(p,q)$ be the probabilities of containment for ascending-time and descending-time, respectively. Let $\mathcal{P}_A'(p,q)$ and $\mathcal{P}_D'(p,q)$ be the observed probabilities of containment for ascending-time and descending-time, respectively, from the first round of trials. Let $d(p, q) = \lvert \mathcal{P}'_A(p, q) - \mathcal{P}'_D(p, q) \rvert$. If $d(p, q) \geq .00035$, we run a second set of $M(d(p, q))$ trials, where we chose $M(d(p, q)) = 50 \cdot \left\lceil \left\lceil 3 \ln(1 / .15) /  (.49 d(p, q))^2 \right\rceil / 50 \right\rceil$.\footnote{There were $4031$ instances where $d(p,q) \geq .00035$, which warranted a second round of trials. Restricted to these instances, the median number of trials run in the second round was $153,250$, the maximum was $193,503,050$, and the minimum was $1500$. In total, across all instances, $95,933,663,250$ trials were run in the second round.}

Let $\mathcal{P}''_A(p, q)$ and $\mathcal{P}''_D(p, q)$ be the observed probabilities of containment for ascending-time and descending-time, respectively, computed solely from the second round of $M(d(p,q))$ trials. Via~\cref{lem:2coin}, we compute the confidence that $\max\{ \mathcal{P}_A(p, q), \mathcal{P}_D(p, q) \} = \max \{ \mathcal{P}''_A(p, q), \mathcal{P}''_D(p, q) \}$. If the confidence is at least $1/2$, the instance $(p, q)$ is colored green if the maximizing policy is ascending-time and yellow if the maximizing policy is descending-time.

Finally, to explore whether it is possible to achieve higher confidence bounds, we run a series of $1.5 \times 10^9$ trials for each policy for instance parameters $p=.19$ and $q=1$, since this emerged from the first set of computational experiments as a natural candidate instance where ascending-time might be the better policy.

\paragraph{Results, part 2.} 
Plot (c) in~\cref{fig:arrival} summarizes the results of the second round of trials. As shown in the plot, descending-time dominates a substantial region of the parameter space where $p$ and $q$ are both large, while ascending-time dominates a band directly below. There are $534$ instances where ascending-time dominates and $3129$ instances where descending-time dominates. We make no claims for the remaining $6538$ instances. As a note, one can qualitatively observe these regions of dominance after running far fewer trials, however a large number of trials is necessary for our confidence guarantees.

Given the confidence bounds for plot (c), we can make some further conclusions. First, we know that in any $5 \times 5$ square of green instances, with confidence at least $1 - 2^{-25}$, there is at least one instance where $\mathcal{P}_A(p,q) > \mathcal{P}_D(p,q)$. Likewise, and with the same confidence, in any $5 \times 5$ square of yellow instances there is at least one instance where $\mathcal{P}_D(p,q) > \mathcal{P}_A(p,q)$. Therefore, with high probability there is a yellow cresent of instances where descending-time has the higher probability of containment, as well as a green crescent of instances directly below where ascending-time has the higher probability of containment. We discuss some compelling directions for future work related to characterizing this border region in~\cref{section:conclusion}.

Finally, while the guarantees for a individual instance in plot (c) only hold with confidence $1/2$, higher confidence guarantees in these regions are also possible. From our series of $1.5 \times 10^9$ trials for instance parameters $p=.19$ and $q=1$ we found that $\mathcal{P}_A(.19, 1) > \mathcal{P}_D(.19, 1)$ with confidence at least $1 - 10^{-10}$.
	
\section{General Policies} \label{section:general}
There are numerous ways of prioritizing nodes based on time-of-arrival. So far we have focused on only two policies, ascending-time and descending-time. We call these policies \textit{monotonic}, because they prioritize nodes in order of increasing or decreasing time-of-arrival, respectively. Could there be some other way of prioritizing nodes by arrival time that has a much higher probability of containment for certain instances?

To better understand the optimality of ascending-time and descending-time, we compare their performance to other policies that prioritize nodes based on time-of-arrival. Since there are countless ways to prioritize nodes based on time-of-arrival, we use an imperfect, heuristic search to identify competitive policies for comparison. Our search finds no other policy which far outperforms both monotonic policies. To be clear, this null result does not preclude the existence of such policies, which perhaps could be identified via more advanced techniques. Rather, the fact that monotonic policies perform on par with policies found via a broad, heuristic search, suggests that ascending-time and descending-time are reasonable policies to analyze and study. 

As a first step towards implementing our heuristic search, we formulate contact tracing as a partially-observable Markov decision process. Then, for each instance we implement a heuristic version of Q-learning to train an instance-specific policy. We then compare the resulting policy's observed probability of containment over a series of trials to that of ascending-time and descending-time. 

\paragraph{Partially-observable Markov decision processes.}
A Markov decision process (MDP) involves an agent that probabilistically moves through a set of states $S$ by choosing different actions. The agent begins at time $t = 0$ in an initial state $s_0 \in S$. At time $t \geq 0$, the agent chooses an action $a_t$ from the set $A_{s_t}$ of actions available from state $s_t$. The agent then receives reward $r(s_t, a_t)$ and transitions to a state $s_{t+1} \sim P(s_t, a_t)$, where $P(s_t, a_t)$ is a distribution on $S$. Conditioned on $s_t$ and $a_t$, the reward $r(s_t, a_t)$ and next state $s_{t+1}$ are independent of all prior states $s_0, \dots, s_{t-1}$ and actions $a_0, \dots, a_{t-1}$~\cite{mohri}.

A partially-observable Markov decision process (POMDP) is an MDP in which the agent does not observe the current state. Instead, after taking action $a_t$ and transitioning to state $s_{t+1}$ the agent makes an observation $o_{t+1} \sim O(s_{t+1}, a_t)$, where $O(s_{t+1}, a_t)$ is a distribution on the set $\Omega$ of all observations the agent can make~\cite{kaelbling}.

There is a wide literature on both of these processes, which is far too vast to be summarized here. A deeper overview of both processes is given in~\cite{littman,puterman}.

\paragraph{Formulating contact tracing as a POMDP.}
At time $t$, the state $s_t$ consists of the infection tree $T_t$ and the time $t$, denoted $s_t = (T_t, t)$. The tree $T_t$ is defined as in~\cref{section:introduction}, where each node is labeled with the following dimensions: its infection status, whether it is active or stable, and its parameters $p$ and $q$. Recall that the frontier $f_{s_t}$ is exactly the set of active children of infected nodes queried on earlier steps. Taking an action corresponds to querying a node from the frontier, so $f_{s_t}$ defines the set of actions available to the tracer from $s_t$. 

Suppose the tracer queries a node $v_t \in f_{s_t}$. To generates state $s_{t+1}$, first $v_t$ is stabilized and its infection status is revealed, and then one round of the infection process runs, as in~\cref{section:introduction}. The resulting tree is $T_{t + 1}$, and $s_{t + 1} = (T_{t + 1}, t + 1)$. The entire process continues from state $s_{t + 1}$ and the tracer receives reward $r(s_t, v_t) = 0$, unless one of the following two conditions is met. If the frontier $f_{s_{t+1}}$ is emtpy, the tracer receives reward $r(s_t, v_t) = 1$ and the process terminates. If the number of active infections in $T_{t+1}$ exceeds $Z_C$, the tracer receives reward $r(s_t, a_t) = -1$ and the process terminates.

In state $s_t$, the tracer's observation $o(s_t)$ consists of the subtree of nodes in $T_t$ which the tracer has already queried and the frontier $f_{s_t}$. Specifically, the tracer observes all dimensions of nodes queried on prior steps and observes all dimensions \textit{except} infection status for nodes in the frontier. Note that, as a results, $o(s_t)$ is a deterministic function of $s_t$.

\paragraph{Q-learning heuristic search.} 
Recall that our goal is to compare ascending-time and descending-time to other competitive time-based policies. Given that there are countless ways to prioritize nodes based on time-of-arrival, we need some way to search over time-based policies. For this, we turn to Q-learning, an algorithm that iteratively learns actions which induce large rewards in different states. In this way, Q-learning provides a heuristic search for competitive time-based policies. 

We now describe our Q-learning implementation more formally. To simplify our implementation, instead of working with the POMDP framework detailed above, we work with \textit{partial states} $s = (f, t)$, where $f_s$ is the frontier and $t_s$ is the current time. Let $V$ be a table of values where each row corresponds to a partial state $s = (f_s, t_s)$ and each column corresponds to an arrival time $\tau$. In this setting, all nodes have the same infection parameters $p$ and $q$, so $f_s$ is simply a multiset of arrival times. Since choosing an action corresponds to querying a node from the frontier, and since nodes are only distinguishable to the tracer by their arrival times, the set of arrival times in $f_s$ are exactly the actions available from $s$. 

Fix parameters $\eps, \alpha, \gamma \in [0, 1]$. To select an action from $s$,
\begin{itemize}
	\item With probability $\eps$, select an action $a$ uniformly at random from $f_s$. 
	\item Otherwise, select $\arg\max_{a \in f_s} V(s, a)$.
\end{itemize}
Suppose that after taking action $a$ from $s$, the process transitions to $s'$, and the tracer receives reward $r(s')$. The table $V$ is then updated to reflect the value of taking action $a$ from partial state $s$.
\begin{align*}
	V(s, a) \leftarrow (1 - \alpha) \cdot V(s, a) + \alpha  (r(s') + \gamma \cdot \max_{a \in f_{s'}} V(s', a) )
\end{align*}
Once training has concluded, the resulting table $V$ can be translated into the following straightforward policy: in partial state $s$ select $\arg\max_{a \in f_s} V(s, a)$.~\cite{watkins,kansal}

A challenge of training policies via the above approach is that the space of partial states quickly grows very large, and the table $V$ needs to keep track of all possible partial states. To make training tractable, we restrict the length of a training episode so that $t \leq 4$, we limit the size of the frontier to at most $3$ nodes, and we restrict the frontier to nodes with arrival time at most $3$. If an episode reaches one of these limits, the reward returned is based on the current frontier $f_s$. Specifically, we return the average reward of $100$ trials, each initialized with the frontier $f_s$, for a fixed \textit{terminal policy}, which is either descending-time or ascending-time. While an episode may terminate in one of these two policies, choices at the beginning of the episode remain unconstrained. On of our goals is to understand whether this extra flexibility at the beginning of an episode results in larger probabilities of containment. 

\paragraph{Computational experiments.} We repeat the experiments from~\cref{section:arrival} with policies trained via the Q-learning process described above and compare the observed probabilities of containment of our learned policies to those of ascending-time and descending-time. As in~\cref{section:arrival}, $k = 3$ and in a given instance every node has the same contagion parameters $p$ and $q$. Parameters $p$ and $q$ each range from $.01$ to $1$ in increments of $.01$, which results in a total of $10^4$ instances. For each instance $(p, q)$, we train two Q-learning policies, one with ascending-time as the terminal policy, which we call \textit{learn-ascend}, and one with descending-time as the terminal policy, which we call \textit{learn-descend}. We run $10^6$ episodes to train each policy, with parameters $\eps=0.1$, $\alpha=0.1$, and $\gamma=0.6$. We then evaluate each policy on $10^6$ trials. 

In order to compare these evaluations to our previous results, we need to convert reward averages to probabilities of containment. Since a reward of $1$ means the infection was contained, and a reward of $-1$ means the infection was not contained, an average reward of $R$ corresponds to a probability of containment of $(R + 1)/2$.

\paragraph{Results.} Recall that we call ascending-time and descending-time monotonic, because they order nodes in either increasing or decreasing time-of-arrival, and our goal is to determine whether there is some other policy which far outperforms these two monotonic policies in a given instance. For each instance $(p,q)$, we compute the difference between the maximum observed probability of containment for our two monotonic policies, ascending-time and descending-time, and the maximum observed probability of containment for our two heuristic policies, learn-descend and learn-ascend. We also compute the difference relative to the maximum observed probability of containment for the two monotonic policies. In both cases, a positive difference indicates that a monotonic policy is better and a negative difference indicates that a heuristic policy is better.  

In our search, no heuristic policy far outperformed the monotonic policies on any instance. The minimum difference was $-0.00228$ and the minimum relative difference was $-0.00712$. Of course, one may wonder whether our search was thorough enough. While we only performed a heuristic search, we think it was a reasonable search in that there are few instances in which the heuristic policies have far worse performance than the monotonic policies. For example, the median difference is $10^{-4}$, the mean relative difference is $0.00023$. The difference is at most $0.005$ in all but $80$ instances, and the relative difference is at most $0.01$ in all but $59$ instances. The maximum difference is $0.07$ and the maximum relative difference is $.3143$.

Thus these results demonstrate that monotonic policies perform on par with policies allowed the freedom to choose any short starting sequence of queries. While these results do not at all rule out the existence of other more competitive policies, they do suggest that ascending-time and descending-time are reasonable policies to analyze and study.

\section{Beyond Time-of-Arrival Heuristics} \label{section:beyond}
In all the instances we have seen so far, every node is governed by the same contagion parameters $p$ and $q$. In this section we explore instances where different nodes may have different contagion parameters. This opens the door to many more potential policies. Whereas nodes in previous sections were only distinguishable to the tracer based on their time-of-arrival, now a node's contagion parameters may play a factor in its prioritization. The main question is, how should we prioritize nodes, given these three dimensions $p_v$, $q_v$, and $t_v$?

We explore three simple policies for prioritizing nodes: by descreasing infection probability $p_v$, by decreasing contact probability $q_v$, and by decreasing time-of-arrival $t_v$, which is exactly the descending-time policy. We compare the performance of these three policies across a range of instances via computational experiments. We say that a policy \textit{dominates} an instance if, of the three policies, it has the highest probability of containment. We find that each policy dominates a large region of the instance space, which further supports our conclusions from~\cref{section:arrival} that the performance of different policies varies across instances. 

Of course, each of the policies we consider prioritizes nodes based on only a single dimension, and there are clearly numerous other policies to consider. A compelling direction for future work is to analyze policies which prioritize nodes based on multiple dimensions. 

\paragraph{Computational experiments.}
We compare the performance of the three policies across a range of instances. Recall that we are studying a mathematical process with a genuine probability of containment which is different from the average observed over a series of trials. The goal of our computational experiments is to determine which policy has the highest probability of containment in each instance, based on each policy's observed performance over a series of trials. A node $v$ has infection probability $p_v \sim D_q$, where $D_q$ is the uniform distribution on $[p_{\min}, 1)$, and contact probability $q_v \sim D_q$, where $D_q$ is the uniform distribution on $[q_{\min}, 1)$. Otherwise, the trials follow the same process as in~\cref{section:arrival}. To generate our space of instances, we let $p_{\min}$ and $q_{\min}$ range independently from $0$ to $1$ in increments of $.01$. This results in a total of $10,201$ instances.

For each instance, we run $5 \times 10^5$ trials for each policy. Via~\cref{lem:3coin_bd}, we compute the confidence that the policy with the highest observed probability of containment is also the policy with the highest (genuine) probability of containment. If the confidence is at least $1/2$, the instance is marked with the color of the dominating policy. 

\paragraph{Results.}
\Cref{fig:slate_plot} describes our results. Recall that these computational experiments involve infection and contact parameters drawn from uniform distributions on $[p_{\min}, 1)$ and $[q_{\min}, 1)$, respectively, while in~\cref{section:arrival} all nodes in a given instance are governed by the same parameters $p$ and $q$. First, we see that when the infection and contact probabilities for all nodes are large, which happens when $p_{\min}$ and $q_{\min}$ are both large, descending-time dominates. 

We do not yet have a mathematical understanding of these regions or an explanation for when one policy might dominate another, however we do draw one insight from these results. Consider the green region where $p_{\min}$ is small and $q_{\min}$ is large. Since $D_q = \Unif[q_{\min}, 1)$, a large value of $q_{\min}$ implies that $D_q$ has small variance. Meanwhile, since $D_p = \Unif[p_{\min}, 1)$, a small value of $p_{\min}$ implies that $D_p$ has large variance. As we see in the plot, in this region the policy which prioritizes by probability of infection $p_v$ dominates. A similar phenomenon occurs in the yellow region where $q_{\min}$ is small and $p_{\min}$ is large. In this region the policy which prioritizes by contact probability $q_v$ dominates. The intuition that we take away from this plot is that if there are very large differences in the variances of $D_p$ and $D_q$, it is better to prioritize by the parameter with the larger variance. There are numerous directions for future work in this areas, which we outline in~\cref{section:conclusion}.
 
\begin{figure} 
\begin{center}
\includegraphics[scale=.5]{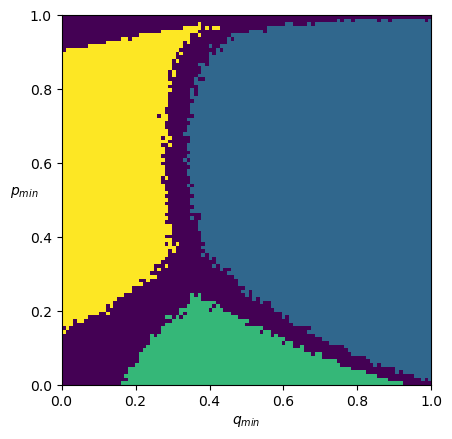}
\caption{ The above figure illustrates results from computational experiments where $p_v \sim \Unif[p_{\min}, 1)$ and $q_v \sim \Unif[q_{\min}, 1)$. Each pixel corresponds to an instance defined by $(p_{\min}, q_{\min})$.  We compare the probabilities of containment for three policies: prioritizing by $p_v$, prioritizing by $q_v$, and descending-time. We say that a policy \textit{dominates} if its probability of containment is signficantly higher than that of the other two policies. Each pixel is colored to indicate which of the three policies, prioritizing by $p_v$ (green), prioritizing by $q_v$ (yellow), or descending-time (blue) dominates. Purple indicates areas of no confidence. }
\label{fig:slate_plot}
\end{center}
\end{figure}
\section{Conclusion} \label{section:conclusion}
In this paper, we model contact tracing as a race between two processes: an infection process and a tracing process. The tracing process is implemented by a tracer, who queries one node each step, with the goal of containing the infection. The main question we study is, which policy for querying nodes should the tracer follow so as to have a high probability of containing the infection? We show, via both theoretical bounds and computational experiments, that policy choice affects probability of containment in many different settings and that the best policy to deploy may depend on the parameters of the contagion.

There are numerous compelling directions for future work going forward. First, while we have evidence from computational experiments in~\cref{section:arrival} that there are settings where ascending-time outperforms descending-time, we do not have a proof for such a setting. An interesting direction for future work would be to prove that there is an instance where ascending-time has the higher probability of containment, which would complement the result for descending-time in~\cref{section:separation}. Second, computational experiments in~\cref{section:arrival} qualitatively show that there is a curve in the $p-q$ parameter space, where descending-time dominates above the curve and ascending-time dominates a band below the curve. Further characterizing this curve and border region is an intriguing direction for future study. Finally, it is interesting to consider and analyze other prioritization policies. For example, in~\cref{section:beyond} the three policies we consider each prioritize nodes across only one dimension. It would be exciting to understand how to prioritize nodes based on multiple dimensions. For example, how should we prioritize two nodes with the same time-of-arrival, but where one has the higher infection probability and the other has the higher contact probability? More broadly, developing theoretical methods for comparing policies based on all three parameters seems like a first step toward exploring optimal policies in these settings.

\section*{Acknowledgments}
Michela Meister was supported by the Department of Defense (DoD) through the National Defense Science \& Engineering Graduate (NDSEG) Fellowship Program. Jon Kleinberg was supported in part by a Simons Investigator Award, a Vannevar Bush Faculty Fellowship, MURI grant W911NF-19-0217, AFOSR grant FA9550-19-1-0183, ARO grant W911NF19-1-0057, and a grant from the MacArthur Foundation.

\bibliographystyle{plain}
\bibliography{references}

\section{Deferred Proofs from~\cref{section:theory}}

\subsection{Proving~\cref{thm:low_q}} \label{appendix:low_q}
\thmlowq*

\begin{proof}
Recall that in each step, either the frontier is empty and therefore the infection is contained, or the tracer stabilizes one node from the frontier. Therefore, at any step $t > k$, either the infection is already contained, or the tracer has stabilized at least $t - k$ nodes. 

Let $T_0', T_1', \dots$ be the transcript of a tree with the given contagion parameters. The proof idea is to show that there is a time $m > k$ at which point with probability at least $1 - \delta$, $T_m'$ has at most $m - k$ nodes. By definition, $T_m$ has at most the number of nodes in $T_m'$. Therefore, with probability at least $1 - \delta$, the tracer has contained the infection by time $t = m$. 

Set $m = \lceil e/\delta \rceil + k$ and $q(\delta, k) = 1 / (\lceil e/\delta \rceil + k)$. Let $X_t'$ be the number of nodes in $T_t'$. Observe that $X_0' = 1$. For all $n > 0$, $\Exp[X_n'] \leq X'_{n - 1}(1 + q)$. Therefore $\Exp[X_n'] \leq (1 + q)^n$. 

By~\cref{lem:apply_markov}, with probability at least $1 - \delta$, $X_m' \leq m - k$. Since $T_m$ has at most as many nodes as $T_m'$, with probability at least $1 - \delta$ the infection is contained by time $t = m$. 
\end{proof}

\begin{lemma} \label{lem:apply_markov}
Fix $\delta \in (0, 1)$. Fix $k \in \NN$. Let $m = \lceil e/\delta \rceil + k$ and let $u = 1/m$. Let $Z$ be a random variable such that $\Exp[Z] \leq (1 + u)^m$. Then with probability at least $1 - \delta$, $Z \leq m - k$. 
\end{lemma}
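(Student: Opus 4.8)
The plan is to apply Markov's inequality directly to $Z$, which I may treat as non-negative (in the intended application it counts the nodes of a growing tree, so $Z \geq 0$, which is exactly what Markov requires). First I would recast the target in complementary form: to prove $\Pr[Z \leq m - k] \geq 1 - \delta$ it suffices to show $\Pr[Z > m - k] \leq \delta$. Applying Markov's inequality at threshold $m - k$ to the non-negative variable $Z$ gives
\[
\Pr[Z > m - k] \leq \Pr[Z \geq m - k] \leq \frac{\Exp[Z]}{m - k},
\]
so the whole lemma reduces to bounding this last ratio by $\delta$.

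The second step simply substitutes the two quantities the lemma hands us. Using the hypothesis $\Exp[Z] \leq (1 + u)^m$ with $u = 1/m$, I would invoke the standard inequality $(1 + 1/m)^m < e$, valid for every positive integer $m$, to obtain $\Exp[Z] < e$. For the denominator, the definition $m = \lceil e/\delta \rceil + k$ gives $m - k = \lceil e/\delta \rceil \geq e/\delta$ immediately. Combining the two bounds yields
\[
\frac{\Exp[Z]}{m - k} < \frac{e}{\lceil e/\delta \rceil} \leq \frac{e}{e/\delta} = \delta,
\]
which is precisely the required estimate, and then $\Pr[Z \leq m - k] = 1 - \Pr[Z > m - k] \geq 1 - \delta$.

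There is no genuine obstacle here; the argument is a short, routine chain. The only points needing a moment of care are (i) confirming non-negativity of $Z$ so that Markov's inequality is legitimate, and (ii) the bookkeeping between the strict event $\{Z > m - k\}$ and the non-strict event $\{Z \geq m - k\}$ appearing in Markov's bound. Since $m - k = \lceil e/\delta \rceil$ is a positive integer and $Z$ is integer-valued, one could instead bound $\Pr[Z \geq m - k + 1]$ and gain a slightly better denominator, but the weaker containment $\{Z > m - k\} \subseteq \{Z \geq m - k\}$ already closes the argument, so I would use that to keep the proof clean. Everything else is the two elementary facts $(1 + 1/m)^m < e$ and $\lceil e/\delta \rceil \geq e/\delta$.
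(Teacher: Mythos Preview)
Your proposal is correct and follows essentially the same route as the paper: apply Markov's inequality to get $\Pr(Z \geq m-k) \leq \Exp[Z]/(m-k)$, bound the numerator by $e$ via $(1+1/m)^m \leq e$, and bound the denominator below by $e/\delta$ via $m-k=\lceil e/\delta\rceil \geq e/\delta$. Your additional remarks about non-negativity of $Z$ and the strict/non-strict event bookkeeping are more careful than the paper's presentation, but the argument is the same.
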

\begin{proof}
By Markov's inequality
\begin{align*}
\Pr \left( Z \geq m - k \right) \leq \frac{\Exp[Z]}{m - k} \leq \frac{(1 + u)^m}{m - k}
\end{align*}

Noting that $u = 1/m$ we have that
\begin{align*}
\frac{(1 + u)^m}{m - k} \leq \frac{e^{um}}{m - k} \leq \frac{e}{m - k} \leq \frac{e}{\lceil e/\delta \rceil + k - k} \leq \frac{e}{\lceil e/\delta \rceil } \leq \frac{e}{ e/\delta } \leq \delta
\end{align*}

Thus $\Pr \left( Z \geq m - k \right) \leq \frac{\Exp[Z]}{m - k} \leq \delta$.
\end{proof}

\subsection{Proving~\cref{thm:low_p}} \label{appendix:low_p}
\thmlowp*
\begin{proof}
Recall that the infection is contained when every infected node is stabilized. Also, recall that only nodes with infected parents ever enter the frontier. Therefore, if the frontier is empty, then every node with an infected parent is stabilized, and thus the infection is contained. Additionally, recall that in each step, either the frontier is empty, or the tracer stabilizes one node from the frontier. Therefore, by time $t > k$, either the infection is contained of the tracer has stabilized at least $t - k$ nodes with infected parents. 

This proof follows a similar structure as the proof for the preceding theorem, with one slight modification. In this case, we analyze a super transcript $T_0'', T_1'', \dots$ with the given infection parameters $p_v$ but where each node has contact probability $1$. Creating the transcript $T_0', T_1', \dots$ from this super transcript simply requires selecting each out-edge of a node $v$ with probability $q_v$. Thus, at any time $t$, $T_t$ has at most as many nodes with infected parents as $T_t''$.

The proof idea is to bound the number of nodes with infected parents in the transcript $T_0'', T_1'', \dots$. We show that there is a time $m > k$ such that with probability at least $1 - \delta$, $T''_m$ has fewer than $m - k$ nodes with infected parents. $T_m$ has at most as many nodes with infected parents as $T_m'$, so with probability at least $1 - \delta$ the infection is contained by step $m$.

Set $m = \lceil e/\delta \rceil + k$ and $p(\delta, k) = 1 / (\lceil e/\delta \rceil + k)$. Let $Y''_t$ be the number of nodes which have an infected parent in $T_t''$. Let $Y_{t,d}''$ be the number of nodes which have an infected parent at depth $d$ in $T_t''$. Observe that, since every node in the super transcript has contact probability $1$, $T''_t$ is a binomial tree. Therefore, there are $n_d = \binom{t}{d}$ nodes in the $d$-th level of $T''_t$. The root by definition has an infected parent, so $Y''_{t,0} = 1$. A node in level $d \geq 1$ is infected if all of its ancestors are infected, which occurs with probability at most $p^d$. Thus, for any $1 \leq d \leq t$,
$$\Exp[Y''_{t,d}] \leq n_d p^d = \binom{t}{d} p^d $$ 
Therefore,
$$\Exp[Y''_t] \leq \sum_{d = 0}^t \Exp[Y''_{t, d}] \leq \sum_{d = 0}^t \binom{t}{d} p^d \leq (1 + p)^t$$

By~\cref{lem:apply_markov}, with probability at least $1 - \delta$, $Y_m'' \leq m - k$. Since $T_m$ has at most as many nodes with infected parents as $T_m''$, with probability at least $1 - \delta$ the infection is contained by time $t = m$. 
\end{proof}

\subsection{Proving~\cref{lem:f_delta}} \label{appendix:f_delta}
\lemfdelta*
\begin{proof}
Set $p, q < 1$ such that $pq > f(\delta)$.\footnote{ Since $0 < \delta < 1$, for any $r > 0$, $(1 - \delta/2)^{1/r} < 1$, so $f(\delta) < 1$, and therefore this setting of $p$ and $q$ is possible.} Set $B = 2 \lceil \max(128, \ln(4/\delta) / 2) \rceil$. Let $X_t$ be the number of active infections at time $t$. By~\cref{lem:reach_B}, with probability at least $1 - \delta/2$, there is a time $t$ where $X_t \geq B$. 

Let $C = 2 \lceil \max(64/(pq), \ln(4/\delta) / 2) \rceil$. Since $pq > f(\delta) > 1/2$, $B \geq C$. By~\cref{lem:B_onward}, given that there is a time $t$ where $X_t \geq C$, with probability at least $1 - \delta/2$, policy $P$ does not contain the infection. A union bound over these two events proves the claim.
\end{proof}

The following lemma shows that with high probability, there is a time $t$ with at least $B$ active infections.
\begin{lemma} \label{lem:reach_B}
Fix $\delta \in (0, 1)$ and $k \geq 3$. Set $B = 2 \lceil \max(128, \ln(4/\delta) / 2) \rceil$. Let $f(\delta) = \max((1 - \delta/2)^{1/(2 \lceil \max(128, \ln(4/\delta) / 2) \rceil + 16))}, 1/2)$. Set $p,q \in (0, 1)$ such that $pq > f(\delta)$. Suppose that for all nodes $v$ $p_v \geq p$ and $q_v \geq q$. Let $X_t$ be the number of active infections at time $t$. With probability at least $1 - \delta/2$, there is a time $t$ where $X_t \geq B$. 
\end{lemma}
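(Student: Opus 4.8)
The plan is to exhibit a single ``good'' event, defined as the success of a bounded collection of independent coin flips, on which the number of active infections is driven up to $B$ no matter what the policy $P$ does. First I would pass to the deferred-decisions description from the start of this section: think of every potential contact of a node $v$ at a given step as an independent coin of bias $q_v \ge q$ and every potential transmission as an independent coin of bias $p_v \ge p$, so that each active infected node spawns a new \emph{infected} child in a given round with probability at least $pq$, all these events being mutually independent, and the root is infected with probability $p_r \ge p \ge pq$. The only structural fact I need about the tracer is that it stabilizes at most one node per step, so at any step $t \ge k$ the number of stabilized nodes is at most $t-k+1$; hence, writing $I_t$ for the number of infected nodes ever created by time $t$, the active-infection count satisfies $X_t \ge I_t - (t-k+1)$ for \emph{every} policy.

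The heart of the argument is a policy-independence observation. Condition on the good event in which the root is infected and, at each round, every surviving (not-yet-stabilized) infected node successfully spawns an infected child. On this event every node in the tree is infected, so every query removes exactly one active infection; consequently, whichever node $P$ chooses to stabilize, the count obeys the deterministic recursion $X_t = 2X_{t-1}$ during the uninhibited phase $t < k$ and $X_t = 2(X_{t-1}-1)$ once tracing begins, giving the closed form $X_t = 2^{\,t-1}+2$ for $t \ge 2$. Thus $X_t$ reaches $B$ at a time $t^\ast$ with $t^\ast = O(\log B)$, and this happens for every policy $P$, because the recursion never referenced which particular node was stabilized -- only that one active infection is lost per step while each survivor reproduces.

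To keep the number of required coin flips within budget I would not run the ``everyone reproduces'' event all the way to first passage (a final doubling step could force nearly $2B$ flips); instead I would demand that all survivors reproduce only through step $t^\ast-1$, and at step $t^\ast$ require just enough survivors -- exactly $B-(X_{t^\ast-1}-1)$ of them -- to reproduce so that $X_{t^\ast}=B$. Revealing the coins in time order, each demanded flip is independent of the past and succeeds with conditional probability at least $pq$, and a direct count shows the total number of demanded flips (the reproductions plus the root infection) is $B + t^\ast - 2 \le B+16$ over the relevant range of $\delta$. Hence the good event has probability at least $(pq)^{B+16}$, which exceeds $1-\delta/2$ precisely because $pq > f(\delta) \ge (1-\delta/2)^{1/(B+16)}$, and on this event $X_{t^\ast}\ge B$.

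The step I expect to be the main obstacle is making the policy-independence rigorous against an \emph{adversarial} tracer: a priori the policy could try to stabilize exactly those nodes I am counting on to reproduce, or steer the frontier so that the recursion breaks down. The clean resolution is the one above -- because on the good event all nodes are infected, the identity of the stabilized node is irrelevant and only its count matters, so one active infection is removed per step regardless of $P$ -- but stating it carefully is where the care lies: one must define the demanded reproductions adaptively (``whichever nodes survive this step'' rather than a fixed list of nodes), and check that any extra, non-demanded infections can only increase $X_t$ relative to the deterministic trajectory. The secondary nuisance is the bookkeeping that pins the flip count at $B + t^\ast - 2$ and verifies $t^\ast - 2 \le 16$.
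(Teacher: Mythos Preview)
Your proposal is correct and follows essentially the same route as the paper: condition on the event that the root is infected and every surviving infected node generates an infected child each round, observe that on this event the recursion $X_t = 2(X_{t-1}-1)$ is policy-independent (this is exactly the paper's supporting Lemma~\ref{lem:bound_tracing}), and bound the number of required coin flips by $B+16$ so that the good event has probability at least $(pq)^{B+16}\ge 1-\delta/2$. Your partial-reproduction-at-the-final-step optimization and the adaptive framing of the demanded flips are refinements the paper does not spell out, but the skeleton of the argument is identical.
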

\begin{proof}
Without loss of generality, let $k = 3$.\footnote{Any $k > 3$ corresponds to a larger head start for the infection, so without loss of generality it suffices to analyze the case where the infection has the smallest head start.} Let $t = \lceil \log(B) \rceil + k$. Suppose that the root is infected, and that every round, every active, infected node generates an infected child. Then by~\cref{lem:bound_tracing}, $X_t \geq B$. 

We now bound the probability that in each round, every active infected node generates an infected child. There are $t$ rounds of infection, so at most $2^t$ infected nodes can potentially be generated, where
\begin{align*}
2^t = 2^{\lceil \log(B) \rceil + k} \leq 2^{\log(B) + k + 1} = 2^{\log(B) + 4} = B + 16
\end{align*}
In a given round, an infected node generates an infected child with probability at least $pq$. Therefore, the probability that all $B + 16$ nodes exist and are infected is at least $(pq)^{B + 16}$. Since $pq > f(\delta)$, 
\begin{align*}
(pq)^{B + 16} &\geq (f(\delta))^{B + 16} \\
&\geq \left( \max((1 - \delta/2)^{1/(2 \lceil \max(128, \ln(4/\delta) / 2) \rceil + 16))}, 1/2) \right)^{B + 16} \\
&= \left( \max((1 - \delta/2)^{1/(B + 16))}, 1/2) \right)^{B + 16} \\
&\geq \left( (1 - \delta/2)^{1/(B + 16)} \right)^{B + 16} \\
&= 1 - \delta/2
\end{align*}
\end{proof}

Given that there are at least $B$ active infections, the following lemma shows that then it is unlikely the infection is ever contained.
\begin{lemma} \label{lem:B_onward}
Fix a policy $P$. Fix $\delta \in (0, 1)$. Suppose that there are probabilities $p, q \in (0, 1)$ such that for all nodes $v$ $p_v \geq p$ and $q_v \geq q$. Let $X_t$ be the number of active infections at time $t$. Let $C = 2 \lceil \max(64/(pq), \ln(4/\delta) / 2) \rceil$. If there is a time $t$ where $X_t \geq C$, then with probability at least $1 - \delta/2$, $P$ does not contain the infection.
\end{lemma}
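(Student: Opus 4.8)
The plan is to show that once the number of active infections reaches a threshold $C$, the infection process behaves like a supercritical branching process whose population, with high probability, never drops to zero, regardless of the tracer's efforts. The key tension is that the tracer removes exactly one node per step, while the active infected nodes collectively generate new infections at a rate that outpaces this removal. I would define the relevant random variable as the number of active infections $X_t$ and track its net change from one step to the next: in each step the tracer stabilizes at most one infected node (decreasing $X_t$ by at most $1$), and then each of the currently active infected nodes independently produces a new infected child with probability at least $pq$. So $X_{t+1} \geq X_t - 1 + B_t$, where $B_t$ is a sum of $X_t$ independent indicators each succeeding with probability at least $pq$.

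First I would condition on the event that at some time $t_0$ we have $X_{t_0} \geq C$, and argue that the drift is strictly positive: since $\Exp[B_t \mid X_t] \geq pq \cdot X_t$ and $X_t \geq C \geq 64/(pq)$, the expected gain $pq \cdot X_t - 1$ is comfortably positive and in fact grows with $X_t$. The core step is a potential/supermartingale argument showing that, from this starting point, the probability the process ever returns to a small value (say below $C/2$ or $0$) is at most $\delta/2$. I would look to apply a concentration inequality — the mention of $\ln(4/\delta)/2$ in the definition of $C$ strongly suggests a Hoeffding- or Azuma-type bound is intended — to control the fluctuations of $X_t$ around its increasing trend. A clean way to package this is to consider the stopped process and bound the probability of the process dipping low before escaping to infinity by summing geometrically decaying failure probabilities over the possible levels.

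More concretely, I would try to establish a "gambler's ruin with positive drift" style estimate: starting from level $C$, with the multiplicative drift factor roughly $pq \geq 1/2$ per active node, the probability of ever reaching extinction is exponentially small in $C$, and the factor of $2$ in $C = 2\lceil \cdots \rceil$ together with the $\ln(4/\delta)/2$ term is calibrated so this extinction probability is at most $\delta/2$. The $64/(pq)$ term ensures the per-step expected growth dominates the tracer's unit removal by a wide enough margin that a single Chernoff/Hoeffding bound on $B_t$ keeps $X_{t+1}$ above $C$ with overwhelming probability at each step, and a union bound over the (effectively geometric) sequence of steps keeps the total failure probability bounded.

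The main obstacle I anticipate is handling the adaptivity of the tracer and the dependence across steps: the tracer's choice of which node to stabilize may depend on the entire history, so I cannot treat the $B_t$ as a fixed independent sequence. The right tool is to set up the argument at the level of a filtration and use the fact that the tracer removes at most one node irrespective of its strategy, so the lower bound $X_{t+1} \geq X_t - 1 + B_t$ holds pointwise for \emph{every} policy $P$, with $B_t$ conditionally independent of the tracer's action given the current active set. Establishing this domination carefully — so that the supermartingale (or its exponential transform) argument applies uniformly over all policies — is the delicate part; once the process is shown to stochastically dominate a fixed supercritical branching process that survives with probability at least $1 - \delta/2$, the conclusion that $P$ fails to contain the infection follows immediately.
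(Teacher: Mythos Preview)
Your high-level plan is sound: once there are $C$ active infections, the tracer's one-per-step removal is dominated by the new infections generated, and a concentration argument plus a summable sequence of failure probabilities gives non-extinction with probability at least $1-\delta/2$. But the execution you sketch differs structurally from the paper's, and your version leaves the hardest step vague.

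The paper does \emph{not} run a step-by-step supermartingale or gambler's-ruin argument. Instead it batches time into \emph{epochs} of $C/2$ rounds. The point of the batching is this: within an epoch the tracer stabilizes at most $C/2$ nodes, so at least half of the active infected nodes present at the start of the epoch are never touched during it, nor are any of their descendants. Those $\geq C/2$ ``safe'' nodes each have $C/2$ rounds to spawn infected children, yielding a sum of at least $(C/2)^2$ independent $\mathrm{Ber}(\geq pq)$ indicators. A single Chernoff bound on this large sum shows the population at least doubles to $2C$ except with probability $\exp(-\Theta(C))$; inductively, after the $m$-th epoch there are at least $2^m C$ active infections except with probability $\exp(-C\,2^{m+1})$, and these failure probabilities sum to at most $2\exp(-2C) \leq \delta/2$ by the choice $C \geq \ln(4/\delta)$. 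The $64/(pq)$ in the definition of $C$ is exactly what makes $C^2 pq/32 \geq 2C$, so that the Chernoff exponent works out.

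Your step-by-step drift analysis could be pushed through (e.g.\ via an exponential potential $\alpha^{-X_t}$), but the phrase ``union bound over the (effectively geometric) sequence of steps'' hides the real work: with a per-step analysis the failure probabilities do not decay until you have already shown $X_t$ is growing, so you need a genuine supermartingale/optional-stopping argument rather than a union bound, and you would have to re-derive the constants. The paper's epoch trick sidesteps this circularity by deterministically reserving half the population from the tracer and getting the doubling in one Chernoff shot.
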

\begin{proof}
Call $C/2$ rounds of the tracing and infection process an $\textit{epoch}$. Starting from step $t$, enumerate the epochs $m = 1, 2, \dots$. By assumption, there is a time $t$ where $X_t \geq C$. By~\cref{lem:induction}, for any $M \in \NN$, with probability at least $1 - \sum_{m=1}^M \exp(-C2^{m})$, at the end of the $M$-th epoch there are at least $2^MC$ active infections. Since
$$\sum_{m=1}^{\infty} \exp(-C 2^{m+1}) \leq 2 \exp(-2C) \leq 2 \exp(-2 \ln(4 / \delta)/2) \leq \delta/2,$$ 
with probability at least $1 - \delta/2$ policy $P$ does not contain the infection.
\end{proof}

A key to the above proof is that, once the number of active infections reaches the threshold $B$, it continues to grow exponentially, as shown in the following lemma. 
\begin{lemma} \label{lem:induction}
Fix a policy $P$. Fix $\delta \in (0, 1)$. Suppose that there are probabilities $p, q \in (0, 1)$ such that for all nodes $v$ $p_v \geq p$ and $q_v \geq q$.
Let $C = 2 \lceil \max(64/(pq), \ln(4/\delta) / 2) \rceil$, and suppose there is a time $t$ where $X_t \geq C$. Call $C/2$ rounds of the tracing and infection process an $\textit{epoch}$. Starting from step $t$, enumerate the epochs $m = 1, 2, 3, \dots$. For any $M \in \NN$, with probability at least $1 - \sum_{m=1}^M \exp(-C2^{m})$, at the end of the $M$-th epoch there are at least $2^MC$ active infections.
\end{lemma}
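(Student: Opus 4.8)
The plan is to prove the statement by induction on the epoch index $m$: conditioned on entering epoch $m$ with at least $2^{m-1}C$ active infections, I will show the number of active infections at least doubles over the epoch except with probability $\exp(-C2^m)$, and then a union bound over $m = 1, \dots, M$ yields the stated failure probability $\sum_{m=1}^M \exp(-C2^m)$. The base case is immediate: by hypothesis $X_t \geq C = 2^0 C$ at the start of epoch $1$.

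For the inductive step, fix an epoch beginning with $N \geq 2^{m-1}C$ active infections. The first ingredient is a deterministic accounting of the tracer's influence: each of the $C/2$ rounds of an epoch lets the tracer stabilize at most one node, so over the whole epoch at most $C/2$ active infections are removed by tracing, regardless of the policy $P$. Since $N \geq C$, it follows that after the tracer's move in any round of the epoch at least $N - C/2 \geq N/2$ of the original active infections are still active, and each of these independently generates an infected child with probability at least $pq$ in that round's infection step. I would then lower-bound the total number of newly infected children $B$ born during the epoch by exposing the rounds one at a time: because at least $N/2$ active infections are present in every round no matter the history, a sequential coupling makes $B$ stochastically dominate $\mathrm{Bin}(CN/4, pq)$.

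Next I would push this through a Chernoff bound. Using $C pq \geq 128$, which follows from $C \geq 2 \cdot 64/(pq)$, the dominating binomial has mean $\mu \geq 32N$, so the lower-tail inequality gives $\Pr[B < 2N] \leq \exp(-c\mu)$ for an absolute constant $c$; since $\mu \geq 16 \cdot 2^m C$, this is at most $\exp(-C2^m)$. On the complementary event $B \geq 2N$, the active-infection count at the end of the epoch is at least $N + B - C/2 \geq 3N - C/2 \geq 2N = 2^m C$, since births add to the count, tracing removes at most $C/2$ from it, and $N \geq C$. This closes the induction.

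The main obstacle is the stochastic-domination step, since the set of active infections evolves under both the adversarial tracer and the random births, so the per-round birth counts are not literally independent of the past. The resolution is the sequential coupling described above: because the deterministic accounting guarantees at least $N/2$ active infections in every round irrespective of history, one exposes the rounds one at a time and couples each round's births below by an independent $\mathrm{Bin}(N/2, pq)$, so the epoch's total births dominate $\mathrm{Bin}(CN/4, pq)$. The remaining work is routine bookkeeping: checking that $C/2$ is an integer, which holds by the $2\lceil \cdot \rceil$ form of $C$, and confirming that the constants leave enough slack for the Chernoff exponent to beat $\exp(-C2^m)$ at every epoch $m$.
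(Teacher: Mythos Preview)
Your proposal is correct and follows essentially the same route as the paper: induct on epochs, use that the tracer can stabilize at most $C/2$ nodes per epoch so at least half of the entering active infections persist, lower-bound the number of new infected children by a sum of Bernoulli$(pq)$ trials, and apply a Chernoff bound to get the per-epoch failure probability $\exp(-C2^m)$ before union-bounding. The only cosmetic difference is that the paper fixes a set of at least $2^{m-1}C/2$ original nodes that are never touched during the epoch and counts their births directly, whereas you obtain the same binomial lower bound via a round-by-round sequential coupling; your treatment of the tracer's adaptivity is if anything slightly more explicit.
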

\begin{proof}
Consider the base case, $M = 1$. By assumption, at the start of the first epoch there are at least $C$ active infections. Since an epoch has $C/2$ rounds, the tracer stabilizes at most $C/2$ active infected nodes during the first epoch. Therefore, there are at least $C/2$ active infected nodes which are not stabilized during the first epoch, and whose descendents are also not stabilized during the first epoch. The proof idea is to lower bound the number of infections these nodes generate over the course of the epoch. Enumerate the these nodes $i = 1, 2, \dots, C/2$. Let $p_i$ and $q_i$ be the infection and contact probabilities for node $i$, respectively. Enumerate the rounds of the epoch $j = 1, \dots, C/2$. 

Let $Y_{i,j}$ indicate the event that node $i$ generates an infected child in round $j$. Then $Y_{i,j} \sim \Ber(p_iq_i)$. Then the number of active infected nodes generated in the first epoch is at least
$$Y = \sum_{j=1}^{C/2} \sum_{i=1}^{C/2} Y_{i,j}$$ 
Let
$$\mu = \Exp[Y] = \sum_{j=1}^{C/2} \sum_{i=1}^{C/2} \Exp[Y_{i,j}] = \frac{C}{2} \sum_{i=1}^{C/2} p_iq_i.$$
Define $\mu' = (C/2)^2 pq$. Since for all $i$, $p_iq_i \geq pq$, $\mu \geq \mu'$. 
Then
\begin{align}
\Pr(Y \leq 2C) &\leq \Pr(Y \leq C^2(pq)/8) \label{eqn:base_1} \\
&= \Pr(Y \leq \mu' / 2) \label{eqn:base_2} \\
&\leq \Pr(Y \leq \mu/2) \label{eqn:base_3} \\
&\leq \exp(-\mu/8) \label{eqn:base_4} \\
&\leq \exp(-\mu'/8) \label{eqn:base_5} \\
&= \exp(-C^2 pq / 32) \label{eqn:base_6} \\
&\leq \exp(-2C) \label{eqn:base_7} 
\end{align}
To understand~\cref{eqn:base_1}, observe that $pq \geq f(\delta) \geq 1/2$, and that $C \geq 128$. Therefore $C^2(pq)/8 \geq C^2/16 \geq 2C$.~\Cref{eqn:base_2} applies the definition of $\mu'$.~\Cref{eqn:base_3} applies $\mu \geq \mu'$,~\cref{eqn:base_4} is due to~\cref{chernoff},~\cref{eqn:base_5} applies $\mu \geq \mu'$,~\cref{eqn:base_6} applies the definition of $\mu'$, and~\cref{eqn:base_7} uses the fact that $C \geq 64/(pq)$. Thus at the end of the first epoch with probability at least $1 - \exp(-2C)$, there are at least $2C$ active infections. 

Assume that for any $l \geq 1$, at the end of the $l$-th epoch with probability at least $1 - \sum_{m=1}^l \exp(-C 2^{m+1})$ there are at least $2^lC$ active infections. Now consider $M = l+1$. With probability at least $1 - \sum_{m=1}^l \exp(-C 2^{m+1})$ we begin with at least $2^lC$ active infections. Since $l \geq 1$, and since an epoch contains only $C/2$ rounds, at least $2^lC/2$ active infected nodes will not be traced during the $l+1$-th epoch. Thus at least $2^lC/2$ active infected nodes probabilistically generate an infected child in each round of the epoch. 

Following the same notation as the base case, enumerate the nodes $i = 1, 2, \dots, 2^lC/2$, and enumerate the rounds $j = 1, \dots, C/2$. Let $Y_{i,j} \sim \Ber(p_iq_i)$ indicate the event that the node $i$ generates an infected child in round $j$. Then the number of active infected nodes generated in epoch $l+1$ is at least
\begin{align*}
Y = \sum_{j=1}^{C/2} \sum_{i=1}^{2^lC/2} Y_{i,j} 
\end{align*}
Let  
\begin{align*}
\mu = \Exp[Y] = \sum_{j=1}^{C/2} \sum_{i=1}^{2^lC/2} \Exp[Y_{i,j}] = \sum_{j=1}^{C/2} \sum_{i=1}^{2^lC/2} p_iq_i
\end{align*}
Define
\begin{align*}
\mu' &= 2^l C^2 pq / 4
\end{align*}
Note that $\mu \geq \mu'$. Then
\begin{align}
\Pr(Y \leq 2^{l+1}C) &\leq \Pr (Y \leq 2^l C^2 pq / 8) \label{eqn:ind_1}\\
&\leq \Pr(Y \leq \mu'/2) \label{eqn:ind_2} \\
&\leq \Pr(Y \leq \mu/2) \label{eqn:ind_3} \\
&\leq \exp(-\mu/8) \label{eqn:ind_4} \\
&\leq \exp(-\mu'/8) \label{eqn:ind_5} \\
&= \exp(-2^l C^2 pq / 32) \label{eqn:ind_6} \\
&\leq \exp(-2^{l+1} C) \label{eqn:ind_7}
\end{align}
To understand~\cref{eqn:ind_1}, observe that $pq \geq f(\delta) \geq 1/2$, and that $C \geq 128$. Therefore $2^l C^2 pq / 8 \geq 2^l C^2 / 16 \geq 2^{l+1} C$.~\Cref{eqn:ind_2} applies the definition of $\mu'$.~\Cref{eqn:ind_3} applies $\mu \geq \mu'$,~\cref{eqn:ind_4} is due to~\cref{chernoff},~\cref{eqn:ind_5} applies $\mu \geq \mu'$,~\cref{eqn:ind_6} applies the defintion of $\mu'$, and~\cref{eqn:ind_7} uses the fact that $C \geq 64/(pq)$. 

Thus, given that there are at least $2^lC$ active infections at the start of epoch $l + 1$, with probability at least $1 - \exp(-2^{l+1} C)$ there are at least $2^{l+1}C$ active infected at the end of epoch $l + 1$. A union bound shows that with probability at least $1 - \sum_{m=1}^{l+1} \exp(-C 2^{m+1})$ there are at least $2^{l+1}C$ active infections by the end of epoch $l + 1$. Therefore, for any $M \in \NN$, with probability at least $1 - \sum_{m=1}^M \exp(-B2^{m+1})$, at the end of the $M$-th epoch there are at least $2^MC$ active infections.
\end{proof}

To support the above lemma, we need to show that, so long as every active infected node generates a child on each step, the number of active infections continues to grow exponentially after tracing has begun.
\begin{lemma} \label{lem:bound_tracing}
Let $k = 3$. Let $X_t$ be the number of active infected nodes present at the end of step $t$. Suppose that the root is infected, and that at every step $t$ every active infected node generates an infected child. Then for all $t \geq 3$, $X_t \geq 2^t + 2$. 
\end{lemma}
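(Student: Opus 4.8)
The plan is to prove the bound by a direct induction on $t$, after first recording the state of the process at the instant tracing begins. Under the lemma's hypotheses no node is ever stabilized before step $k=3$, so for $0 \le t < 3$ the active infected nodes simply double each round: the root is infected at step $0$, and thereafter every active infected node spawns exactly one active infected child. Hence the active infected population forms a full binomial tree, giving $X_0 = 1$, $X_1 = 2$, and $X_2 = 4$. This pins down the situation just before the first query.

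Next I would extract the one-step recurrence governing $X_t$ for $t \ge 3$. Each such step of the model in~\cref{section:introduction} consists of a single query followed by one infection round. The query stabilizes at most one active infected node, and in the ensuing infection round each surviving active infected node generates a new active infected child, so the survivors double. This yields the one-step inequality
\[
X_t \ \ge\ 2\bigl(X_{t-1} - 1\bigr) \ =\ 2 X_{t-1} - 2 .
\]
The base case ($t = 3$) follows by applying this once to $X_2 = 4$, and the inductive step is an immediate substitution of the inductive hypothesis into the recurrence.

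To solve the affine recurrence cleanly I would track the shifted quantity $X_t - 2$, whose fixed point absorbs the additive constant: the inequality above rewrites as $X_t - 2 \ge 2(X_{t-1} - 2)$, a purely geometric recurrence. Iterating from the base value then produces an exponential lower bound in $t$ carrying the additive offset $2$, and I would phrase the induction entirely in terms of $X_t - 2$ so the constant never has to be threaded through each step by hand. This growth comfortably dominates the threshold $B$ at the time $\lceil \log B\rceil + k$ at which the bound is invoked in~\cref{lem:reach_B}.

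The only place requiring genuine care is the accounting behind the recurrence rather than any analytic difficulty: one must confirm that in a single step the tracer eliminates \emph{at most} one node from the active infected set (it queries exactly one node), and that stabilizing a node does not retroactively remove the children it already produced in earlier rounds, so the surviving population truly doubles. Once this bookkeeping is fixed against the query-then-infect ordering of a step, the induction is routine and the bound follows.
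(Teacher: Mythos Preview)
Your proposal is correct and follows essentially the same approach as the paper: both argue by induction on $t$ via the one-step recurrence $X_t \ge 2(X_{t-1}-1)$ obtained from the query-then-infect ordering, starting from $X_2 = 4$ and computing $X_3 = 6$. Your passage to the shifted variable $X_t - 2$ is simply a cosmetic repackaging of the paper's direct computation $X_{n+1} \ge (X_n - 1)\cdot 2$.
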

\begin{proof}
Proof by induction. Consider the base case, $t = 3$. Let $T_t$ be the infection tree at time $t$. Recall that the infection process runs uninhibited until tracing begins in step $t = k$. Therefore, under these assumptions $T_2$ contains exactly $4$ active infected nodes. Also, recall that tracing proceeds first, followed by a round of the infection process. Therefore, $T_2$ describes the states of the infection tree at the start of step $t = 3$. The only node in the fronteir is the root, so the tracer stabilizes the root, and each of the remaining $3$ nodes generates a new infected child. Therefore $X_3 = (4-1) \cdot 2 = 6$. 

Assume that for all $n \geq 3$, $X_n \geq 2^n + 2$. Now consider $t = n + 1$. By assumption, at the start of the round there are $2^n + 2$ active infections. 
The tracer stabilizes one node, and each of the remaining nodes generates a new infected child. Therefore there are $X_{n+1} = (2^n + 2 - 1) \cdot 2 = 2^{n+1} + 2$ active infections at the end of round $t = n + 1$. Therefore, for all $t \geq 3$, $X_t \geq 2^t + 2$.
\end{proof}

\begin{theorem} [Paraphrased from Theorem 4.5 in~\cite{mitzenmacher_upfal}] \label{chernoff}
Let $Y_1, \dots, Y_n$ be independent Poisson trials such that $\Pr(Y_i) = u_i$. Let $Y = \sum_{i=1}^n Y_i$ and $\mu = \Exp[Y]$. Then for any $0 < \gamma \leq 1$, 
\begin{align*}
\Pr(Y \leq (1 - \gamma)\mu) \leq \exp(-\mu \gamma^2 / 2)
\end{align*}
\end{theorem}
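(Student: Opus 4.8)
The plan is to prove this lower-tail bound by the standard exponential-moment (Chernoff) method; since it is a self-contained concentration inequality, I would not expect to invoke any of the contact-tracing machinery developed earlier in the paper.

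First I would introduce a free parameter $s > 0$ and exploit the monotonicity of $x \mapsto e^{-sx}$ to rewrite the event: $\Pr(Y \leq (1-\gamma)\mu) = \Pr(e^{-sY} \geq e^{-s(1-\gamma)\mu})$. Markov's inequality then gives $\Pr(Y \leq (1-\gamma)\mu) \leq e^{s(1-\gamma)\mu}\,\Exp[e^{-sY}]$. Because the $Y_i$ are independent, the moment generating function factors as $\Exp[e^{-sY}] = \prod_{i=1}^{n} \Exp[e^{-sY_i}]$, and for a Bernoulli trial with $\Pr(Y_i = 1) = u_i$ we have $\Exp[e^{-sY_i}] = 1 + u_i(e^{-s} - 1)$. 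Applying $1 + x \leq e^{x}$ to each factor and using $\sum_i u_i = \mu$ yields $\Exp[e^{-sY}] \leq \exp\big((e^{-s}-1)\mu\big)$, so that $\Pr(Y \leq (1-\gamma)\mu) \leq \exp\big(\mu[(e^{-s}-1) + s(1-\gamma)]\big)$.

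The remaining work is to optimize over $s$ and then simplify. Minimizing the exponent $g(s) = (e^{-s}-1) + s(1-\gamma)$ by solving $g'(s) = -e^{-s} + (1-\gamma) = 0$ gives the optimal choice $s = \ln\!\big(1/(1-\gamma)\big) > 0$ for $\gamma \in (0,1)$, with the boundary case $\gamma = 1$ recovered by letting $s \to \infty$. Substituting this back produces the classical intermediate bound $\Pr(Y \leq (1-\gamma)\mu) \leq \big(e^{-\gamma}/(1-\gamma)^{1-\gamma}\big)^{\mu}$, whose exponent equals $\mu\big(-\gamma - (1-\gamma)\ln(1-\gamma)\big)$.

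I expect the only real obstacle to be the final analytic step, namely the elementary inequality $-\gamma - (1-\gamma)\ln(1-\gamma) \leq -\gamma^2/2$ on $(0,1]$, which converts the intermediate bound into the stated form $\exp(-\mu\gamma^2/2)$; everything before it is routine. I would establish it by setting $\phi(\gamma) = (1-\gamma)\ln(1-\gamma) + \gamma - \gamma^2/2$, observing $\phi(0) = 0$, and computing $\phi'(\gamma) = -\ln(1-\gamma) - \gamma \geq 0$ (since $\ln(1/(1-\gamma)) \geq \gamma$ on $[0,1)$). Hence $\phi$ is nondecreasing and nonnegative on $[0,1)$, which is exactly the desired inequality, and the endpoint $\gamma = 1$ follows by taking a limit.
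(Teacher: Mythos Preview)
Your proof is correct and is the standard exponential-moment argument for the lower-tail Chernoff bound. Note, however, that the paper does not actually prove this statement: it is quoted as a known result from Mitzenmacher and Upfal's textbook (Theorem~4.5 there), so there is no in-paper proof to compare against. Your derivation is exactly the one found in that reference, including the optimization step yielding $\big(e^{-\gamma}/(1-\gamma)^{1-\gamma}\big)^{\mu}$ and the subsequent simplification via the inequality $(1-\gamma)\ln(1-\gamma) + \gamma - \gamma^{2}/2 \geq 0$.
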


\section{Deferred proofs from~\cref{section:arrival}} 
Recall that we need to decide how many trials to run in the second round of experiments, after observing the outcomes of the first set of trials. We can model the two policies as two coins $a$ and $b$, with unknown biases $p_a$ and $p_b$, respectively. We flip each coin $N$ times and observe heads $X_a = \tilde{p}_a N$ heads from coin $a$ and $X_b = \tilde{p}_b N$ heads from coin $b$. Without loss of generality, suppose that $\tilde{p}_a > \tilde{p}_b$, and let $d = \lvert \tilde{p}_a - \tilde{p}_b \rvert > 0$. What is the probability that $p_a > p_b$? 

\begin{lemma} \label{lem:2coin}
Let $\eps = .49 d$. If $\eps / p_a, \eps / p_b \in (0, 1)$,
\begin{align} \label{2coin_bd}
\Pr\left(p_a > p_b\right) &\geq 1 - 2\exp(- N \eps^2 / 3).
\end{align}
\end{lemma}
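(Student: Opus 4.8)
The plan is to certify the true ordering $p_a > p_b$ from the empirical gap by splitting it into two one-sided confidence statements, one per coin, and combining them with a union bound. First I would treat the observed gap $d = \tilde{p}_a - \tilde{p}_b > 0$, and hence $\eps = 0.49 d$, as fixed, and define the two good events $E_a:\ p_a \ge \tilde{p}_a - \eps$ and $E_b:\ p_b \le \tilde{p}_b + \eps$. The crucial deterministic step is that $E_a \cap E_b$ already forces $p_a > p_b$: on this event,
\[
p_a - p_b \ \ge\ (\tilde{p}_a - \eps) - (\tilde{p}_b + \eps) \ =\ d - 2\eps \ =\ 0.02\, d \ >\ 0,
\]
using $\eps = 0.49 d$ and $d > 0$. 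Thus it remains only to show that both good events hold with high probability.

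Next I would control the complementary events with the multiplicative Chernoff bound. Writing $X_a$ as a sum of $N$ independent $\Ber(p_a)$ trials with mean $\mu_a = N p_a$, the bad event $\lnot E_a$ is exactly $X_a > \mu_a(1 + \gamma_a)$ with $\gamma_a = \eps/p_a$; the hypothesis $\eps/p_a \in (0,1)$ is precisely the admissibility condition $\gamma_a \in (0,1)$, so the upper-tail bound gives $\Pr(\lnot E_a) \le \exp(-\mu_a \gamma_a^2/3) = \exp(-N\eps^2/(3 p_a))$. Symmetrically, $\lnot E_b$ is $X_b < \mu_b(1 - \gamma_b)$ with $\mu_b = N p_b$ and $\gamma_b = \eps/p_b \in (0,1)$, and \cref{chernoff} gives $\Pr(\lnot E_b) \le \exp(-\mu_b\gamma_b^2/2) = \exp(-N\eps^2/(2 p_b))$. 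Since $p_a, p_b \le 1$, both quantities are at most $\exp(-N\eps^2/3)$, so a union bound yields $\Pr(\lnot E_a \cup \lnot E_b) \le 2\exp(-N\eps^2/3)$; combined with the deterministic implication $E_a \cap E_b \Rightarrow p_a > p_b$, this gives $\Pr(p_a > p_b) \ge 1 - 2\exp(-N\eps^2/3)$, which is \cref{2coin_bd}.

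I expect the main difficulty to be bookkeeping rather than depth: choosing the correct tail for each coin (upper tail for $a$, lower tail for $b$) and routing both through the common constant $1/3$, which forces using the weaker upper-tail bound (constant $1/3$) for $a$ while merely loosening the stronger lower-tail bound of \cref{chernoff} (constant $1/2$) for $b$. The only genuinely delicate point is interpreting the probability: since $\eps$ is defined through the observed gap, the statement should be read as a confidence computed with the realized gap $d$ held fixed, the randomness residing in the deviation of each empirical mean from its true bias. Flagging this interpretation, together with verifying that the hypotheses $\eps/p_a, \eps/p_b \in (0,1)$ supply exactly the $\gamma \in (0,1)$ condition required by the multiplicative Chernoff bound, is all that is needed to make the two tail estimates rigorous.
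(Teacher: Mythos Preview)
Your proposal is correct and follows essentially the same route as the paper: define the two one-sided confidence events $E_a$ and $E_b$, observe that on $E_a\cap E_b$ the gap $d-2\eps=0.02d>0$ forces $p_a>p_b$, then bound $\Pr(\lnot E_a)$ via the upper-tail multiplicative Chernoff bound (constant $1/3$) and $\Pr(\lnot E_b)$ via the lower-tail bound of \cref{chernoff} (constant $1/2$), use $p_a,p_b\le 1$ to drop the denominators, and combine with a union bound. Your explicit flagging of the conditioning on the realized gap $d$ is a clarification the paper leaves implicit.
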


\begin{proof}
Let $E_a$ be the event that $p_a > \tilde{p}_a - \eps$, and let $E_b$ be the event that $p_b < \tilde{p}_b + \eps$. If $E_a$ and $E_b$ both hold, then 
$$p_a > \tilde{p}_a - d/2 = \tilde{p}_b + d/2 > p_b.$$ 

Therefore,
\begin{align*}
\Pr\left(p_a > p_b\right) &\geq \Pr\left(E_a \land E_b\right) \\
&= 1 - \Pr\left(\lnot E_a \lor \lnot E_b\right) \\
&\geq 1 - (\Pr\left(\lnot E_a\right) + \Pr\left(\lnot E_b\right))
\end{align*}

Most of the remaining work follows the analysis of section 4.2.3 ``Application: Estimating a Parameter'' in~\cite{mitzenmacher_upfal}. Let $\mu_a = p_a N$ and let $\mu_b = p_b N$.
\begin{align*}
\Pr\left(\lnot E_a\right) + \Pr\left(\lnot E_b\right) &= \Pr\left(p_a \leq \tilde{p}_a - \eps\right) + \Pr\left(p_b \geq \tilde{p}_b + \eps\right) \\
&= \Pr\left(p_a N \leq \tilde{p}_a N - \eps N\right) + \Pr\left(p_b N \geq \tilde{p}_b N + \eps N\right) \\
&= \Pr\left(p_a N \leq X_a - \eps N\right) + \Pr\left(p_b N \geq X_b + \eps N\right) \\
&= \Pr\left(p_a N + \eps N \leq X_a \right) + \Pr\left(p_b N - \eps N \geq X_b \right) \\
&= \Pr\left(X_a \geq p_a N + \eps N \right) + \Pr\left(X_b \leq p_b N - \eps N \right) \\
&= \Pr\left(X_a \geq p_a N + (\eps / p_a) p_a N \right) + \Pr\left(X_b \leq p_b N - (\eps / p_b) p_b N \right) \\
&= \Pr\left(X_a \geq (1 + (\eps / p_a)) p_a N \right) + \Pr\left(X_b \leq (1 - (\eps / p_b)) p_b N \right) \\
&= \Pr\left(X_a \geq (1 + (\eps / p_a)) \mu_a \right) + \Pr\left(X_b \leq (1 - (\eps / p_b)) \mu_b\right)
\end{align*}
If $\eps / p_a, \eps / p_b \in (0, 1)$, then we can apply~\cref{bound_big} to the first term and~\cref{bound_small} to the second term. Continuing with the above calculations, we have that
\begin{align*}
\Pr\left(\lnot E_a\right) + \Pr\left(\lnot E_b\right) &= \Pr\left(X_a \geq (1 + (\eps / p_a)) \mu_a \right) + \Pr\left(X_b \leq (1 - (\eps / p_b)) \mu_b\right) \\
&\leq \exp(-\mu_a (\eps / p_a)^2 / 3) + \exp(-\mu_b (\eps / p_b)^2 / 2) \\
&= \exp(-p_a N (\eps / p_a)^2 / 3) + \exp(-p_b N (\eps / p_b)^2 / 2) \\
&= \exp(- N (\eps^2 / p_a) / 3) + \exp(- N (\eps^2 / p_b) / 2) \\
&\leq \exp(- N \eps^2 / 3) + \exp(- N \eps^2 / 2) \\
&\leq 2\exp(- N \eps^2 / 3)
\end{align*}
The penultimate inequality comes from the fact that $p_a, p_b \leq 1$. Therefore
\begin{align*}
\Pr\left(p_a > p_b\right) &\geq 1 - (\Pr\left(\lnot E_a\right) + \Pr\left(\lnot E_b\right)) \\
&\geq 1 - 2\exp(- N \eps^2 / 3)
\end{align*}
\end{proof}

\subsection{Implementation details for applying the above bound in~\cref{section:arrival}}
How can we verify that $\eps / p_a, \eps / p_b \in (0, 1)$, if $p_a$ and $p_b$ are unknown? While we don't know $p_a$ or $p_b$, we \textit{do} know that $p_a, p_b \geq 1 - p_{\textrm{infection}}$. This follows from the fact that the root is infected with probability $p_{\textrm{infection}}$, and if the root node is not infected, the infection is trivially contained. 

So for a fixed confidence threshold $m \in [0, 1]$, to determine whether $p_a > p_b$ with confidence at least $m$, we need to implement the following process in the code:
\begin{enumerate}
\item Compute $p_0 = 1 - p_{\textrm{infection}}$.
\item Compute $\eps = .49 \lvert \tilde{p}_a - \tilde{p}_b \rvert$.
\item If $\eps / p_0 > 1$, report ``no confidence'' for that pixel. 
\item Otherwise, $\eps / p_a \leq \eps / p_0 < 1$ and $\eps / p_b \leq \eps / p_0 < 1$, so our Chernoff bound applies. 
\item Compute~\cref{2coin_bd}. If the confidence is at least $m$, report that $p_a > p_b$ with confidence at least $m$; else report ``no confidence''.
\end{enumerate}

\section{Deferred proofs from~\cref{section:beyond}}
We can model the three policies as threes coins $a$, $b$, and $c$, with unknown biases $p_a$, $p_b$, and $p_c$, respectively. We flip each coin $N$ times and observe heads $X_a = \tilde{p}_a N$ heads from coin $a$, $X_b = \tilde{p}_b N$ heads from coin $b$, and $X_c = \tilde{p}_c N$ heads from coin $c$. Without loss of generality, suppose that $\tilde{p}_a > \tilde{p}_b > \tilde{p}_c$. Let $d_1 = \lvert \tilde{p}_a - \tilde{p}_b \rvert$ and let $d_2 = \lvert \tilde{p}_a - \tilde{p}_c \rvert$. What is the probability that $p_a = \max(p_a, p_b, p_c)$? 

\begin{lemma} \label{lem:3coin_bd}
Let $\eps_1 = .49 d_1$ and let $\eps_2 = .49 d_2$. If $\eps_1 / p_a, \eps_1 / p_b, \eps_2 / p_c \in (0, 1)$, then 
\begin{align} \label{3coin_bd}
\Pr(p_a = \max(p_a, p_b, p_c)) &\geq 1 - 3 \exp(- N \eps_1^2 / 3)
\end{align}
\end{lemma}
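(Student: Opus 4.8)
The plan is to generalize the two-coin argument of \cref{lem:2coin}. I would define three ``good'' events whose conjunction forces $p_a$ to be the largest of the three biases, and then union-bound their complements using exactly the Chernoff bounds already invoked in \cref{lem:2coin}. Concretely, set $E_a : p_a > \tilde p_a - \eps_1$, $E_b : p_b < \tilde p_b + \eps_1$, and $E_c : p_c < \tilde p_c + \eps_2$. The structural claim is that $E_a \land E_b \land E_c$ implies $p_a = \max(p_a, p_b, p_c)$.

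For the comparison with $b$, this is the same computation as in \cref{lem:2coin}: since $\tilde p_a - \tilde p_b = d_1$ and $\eps_1 = .49 d_1 < d_1 / 2$, on $E_a \land E_b$ we get $p_a > \tilde p_a - \eps_1 = \tilde p_b + d_1 - \eps_1 > \tilde p_b + \eps_1 > p_b$. For the comparison with $c$ I would use the ordering $d_1 \le d_2$, hence $\eps_1 \le \eps_2$, together with the slack built into the constant $.49$. On $E_a \land E_c$ we have $p_a > \tilde p_a - \eps_1 = \tilde p_c + d_2 - \eps_1$, and since $\eps_1 + \eps_2 = .49(d_1 + d_2) \le .98 d_2 < d_2$, this quantity exceeds $\tilde p_c + \eps_2 > p_c$. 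Thus $E_a \land E_b \land E_c$ forces $p_a$ to be the maximum, and so $\Pr(p_a = \max(p_a, p_b, p_c)) \ge 1 - \Pr(\lnot E_a) - \Pr(\lnot E_b) - \Pr(\lnot E_c)$.

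It then remains to bound the three complements, which I would do exactly as in \cref{lem:2coin}: rewrite each event in terms of $X_a, X_b, X_c$ and $\mu_a = p_a N$, $\mu_b = p_b N$, $\mu_c = p_c N$, then apply \cref{bound_big} to $\lnot E_a$ and \cref{bound_small} to $\lnot E_b$ and $\lnot E_c$. The hypotheses $\eps_1 / p_a, \eps_1 / p_b, \eps_2 / p_c \in (0,1)$ are precisely what is needed for these bounds to apply, and after using $p_a, p_b, p_c \le 1$ they yield $\Pr(\lnot E_a) \le \exp(-N \eps_1^2 / 3)$, $\Pr(\lnot E_b) \le \exp(-N \eps_1^2 / 2)$, and $\Pr(\lnot E_c) \le \exp(-N \eps_2^2 / 2)$.

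The last step, and the one requiring a little care, is collapsing these three into the single stated bound $3\exp(-N\eps_1^2/3)$. Here I would use $\eps_2 \ge \eps_1$ to replace $\exp(-N\eps_2^2/2)$ by $\exp(-N\eps_1^2/2)$, and then $\exp(-N\eps_1^2/2) \le \exp(-N\eps_1^2/3)$, so that every complement is at most $\exp(-N\eps_1^2/3)$; summing and subtracting from $1$ gives the claim. The only genuinely new ingredient relative to the two-coin case is the $a$-versus-$c$ separation, so I expect the main obstacle to be bookkeeping the two different gaps $d_1, d_2$ correctly: the choice $\eps_2 = .49 d_2$ and the inequality $\eps_1 + \eps_2 < d_2$ (which relies on $d_1 \le d_2$) are exactly what make that separation go through, while the final bound must still be expressed in terms of the smaller gap $\eps_1$.
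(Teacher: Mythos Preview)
Your proposal is correct and follows essentially the same approach as the paper: define the three events $E_a, E_b, E_c$, show their conjunction forces $p_a$ to be maximal, then union-bound the complements via \cref{bound_big} and \cref{bound_small} and collapse using $\eps_1 \le \eps_2$. The only cosmetic difference is in the $a$-versus-$c$ step, where the paper passes through $\tilde p_a - \eps_1 \ge \tilde p_a - \eps_2 > \tilde p_c + \eps_2$ (using $2\eps_2 < d_2$) while you use $\eps_1 + \eps_2 < d_2$ directly; these are equivalent.
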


\begin{proof}
Let $E_A$ be the event that $p_a > \tilde{p}_a - \eps_1$, let $E_B$ be the event that $p_b < \tilde{p}_b + \eps_1$, and let $E_C$ be the event that $p_c < \tilde{p}_c + \eps_2$. Note that $\eps_1 \leq \eps_2$. If $E_A$ and $E_B$ both hold, then 
\begin{align*}
p_a \geq \tilde{p}_a - \eps_1 > \tilde{p}_b + \eps_1 \geq p_b
\end{align*}
And if $E_A$ and $E_C$ both hold, then
\begin{align*}
p_a \geq \tilde{p}_a - \eps_1 \geq \tilde{p}_a - \eps_2 > \tilde{p}_c + \eps_2 \geq p_c
\end{align*}
Therefore, if $E_A$, $E_B$, and $E_C$ all hold simultaneously, $p_a = \max(p_a, p_b, p_c)$. We have that
\begin{align*}
\Pr(p_a = \max(p_a, p_b, p_c)) &\geq \Pr(E_A \land E_B \land E_C) \\
&= 1 - \Pr(\lnot E_A \lor \lnot E_B \lor \lnot E_C) \\
&\geq 1 - (\Pr(\lnot E_A) + \Pr(\lnot E_B) + \Pr(\lnot E_C))
\end{align*}

First, let's bound the right hand side. Let $\mu_a = p_a N$, $\mu_b = p_b N$, and $\mu_c = p_c N$. 
\begin{align*}
\Pr(\lnot E_A) + \Pr(\lnot E_B) + \Pr(\lnot E_C) &= \Pr(p_a \leq \tilde{p}_a - \eps_1) + \Pr(p_b \geq \tilde{p}_b + \eps_1) + \Pr(p_c \geq \tilde{p}_c + \eps_2) \\
&\leq \Pr(p_a N \leq \tilde{p}_a N - \eps_1 N) + \Pr(p_b N \geq \tilde{p}_b N + \eps_1 N) + \Pr(p_c N \geq \tilde{p}_c N + \eps_2 N) \\
&\leq \Pr(p_a N + \eps_1 N \leq \tilde{p}_a N ) + \Pr(p_b N - \eps_1 N \geq \tilde{p}_b N) + \Pr(p_c N - \eps_2 N \geq \tilde{p}_c N) \\
&\leq \Pr(\tilde{p}_a N \geq p_a N + \eps_1 N) + \Pr(\tilde{p}_b N \leq p_b N - \eps_1 N) + \Pr(\tilde{p}_c N \leq p_c N - \eps_2 N) \\
&\leq \Pr(X_a \geq (1 + (\eps_1 / p_a)) \mu_a) + \Pr(X_b \leq (1 - (\eps_1 / p_b)) \mu_b) + \Pr(X_c \leq (1 - (\eps_2 / p_c)) \mu_c)
\end{align*}
If $\eps_1 / p_a, \eps_1 / p_b, \eps_2 / p_c \in (0, 1)$, then we can apply~\cref{bound_big} to the first term and~\cref{bound_small} to the second two terms. Continuing with the above calculations, we have that
\begin{align*}
\Pr(\lnot E_A) + \Pr(\lnot E_B) + \Pr(\lnot E_C) &\leq \exp(-\mu_a (\eps_1 / p_a)^2 / 3) + \exp(-\mu_b (\eps_1 / p_b)^2 / 2) + \exp(-\mu_c (\eps_2 / p_c)^2 / 2) \\
&\leq \exp(- N \eps_1^2 /(3 p_a)) + \exp(- N \eps_1^2 / (2 p_b)) + \exp(- N \eps_2^2 / (2 p_c)) \\
&\leq \exp(- N \eps_1^2 / 3) + \exp(- N \eps_1^2 / 2) + \exp(- N \eps_2^2 / 2) \\
&\leq 3 \exp(- N \eps_1^2 / 3)
\end{align*}
The penultimate inequality comes from the fact that $p_a, p_b, p_c \leq 1$. The final inequality comes from the fact that $\eps_2 > \eps_1$. Therefore,
\begin{align*}
\Pr(p_a = \max(p_a, p_b, p_c)) &\geq 1 - 3 \exp(- N \eps_1^2 / 3)
\end{align*}
\end{proof}

\subsection{Implementation details for applying the above bound in~\cref{section:beyond}}
In the experiment in~\cref{section:beyond}, the probability of infection is drawn from the distribution $\Unif(p_{\min}, 1)$. Therefore, for a fixed $p_{\min}$, the probability that the root is infected is $p_{\textrm{infected}} = (p_{\min} + 1) / 2$. Therefore, $p_a, p_b, p_c \geq p_0$, where  
\begin{align*}
p_0 &\geq 1 - p_{\textrm{infected}} \\
&= 1 - (p_{\min} + 1) / 2,
\end{align*}

The process is similar as before. For a fixed confidence threshold $m \in [0, 1]$, to determine whether $p_a = \max(p_a, p_b, p_c)$ with confidence at least $m$, we need to implement the following process in the code:
\begin{enumerate}
\item Compute $p_{\textrm{infection}} = (1 + p_{\min}) / 2$ for each pixel.
\item Compute $p_0 = 1 - p_{\textrm{infection}}$ for each pixel.
\item Compute $\eps_1 = .49 \lvert \tilde{p}_a - \tilde{p}_b \rvert$ and $\eps_2 = .49 \lvert \tilde{p}_a - \tilde{p}_c \rvert$.
\item If $\eps_1 / p_0 > 1$ or $\eps_2 / p_0 > 1$, report ``no confidence'' for that pixel. 
\item Otherwise, $\eps_1 / p_a \leq \eps_1 / p_0 < 1$, $\eps_1 / p_b \leq \eps_1 / p_0 < 1$, and $\eps_2 / p_c \leq \eps_2 / p_0 < 1$, so our Chernoff bound applies. 
\item Compute~\cref{3coin_bd}. If the confidence is at least $m$, report that $p_a = \max(p_a, p_b, p_c)$ with confidence at least $m$; else report ``no confidence''.
\end{enumerate}

\subsection{Referenced Chernoff Bounds}
\begin{theorem} \label{bound_big} [Thm 4.4 (2) in~\cite{mitzenmacher_upfal}, almost verbatim] Let $X_1, \dots, X_n$ be independent Poisson trials such that $\Pr(X_i) = p_i$. Let $X = \sum_{i=1}^n X_i$ and $\mu = \Exp\left(X\right)$. Then the following Chernoff bound holds: for $0 < \delta \leq 1$, 
$$\Pr(X \geq (1 + \delta)\mu) \leq \exp(-\mu \delta^2 / 3).$$
\end{theorem}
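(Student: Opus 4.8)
The plan is to use the standard exponential-moment (Chernoff) method. For any $t > 0$, the event $X \geq (1+\delta)\mu$ is equivalent to $e^{tX} \geq e^{t(1+\delta)\mu}$, so Markov's inequality applied to the nonnegative random variable $e^{tX}$ gives
$$\Pr(X \geq (1+\delta)\mu) \leq \frac{\Exp[e^{tX}]}{e^{t(1+\delta)\mu}}.$$
The argument then breaks into three steps: bound the moment generating function $\Exp[e^{tX}]$, choose the optimal $t$, and verify the resulting scalar inequality using the hypothesis $0 < \delta \leq 1$.

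First I would bound the moment generating function. Since the $X_i$ are independent, $\Exp[e^{tX}] = \prod_{i=1}^n \Exp[e^{tX_i}]$, and for each Bernoulli trial $\Exp[e^{tX_i}] = 1 + p_i(e^t - 1) \leq \exp\bigl(p_i(e^t - 1)\bigr)$ by the elementary bound $1 + x \leq e^x$. Taking the product over $i$ and recalling $\mu = \sum_i p_i$ yields $\Exp[e^{tX}] \leq \exp\bigl(\mu(e^t - 1)\bigr)$. Substituting into the Markov bound gives
$$\Pr(X \geq (1+\delta)\mu) \leq \exp\bigl(\mu(e^t - 1) - t(1+\delta)\mu\bigr).$$

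Next I would optimize over $t$. Minimizing the exponent $\mu(e^t - 1) - t(1+\delta)\mu$ in $t$ gives the first-order condition $e^t = 1+\delta$, i.e.\ $t = \ln(1+\delta)$, which is positive since $\delta > 0$, as the method requires. Plugging this choice back in produces the familiar closed form
$$\Pr(X \geq (1+\delta)\mu) \leq \left( \frac{e^\delta}{(1+\delta)^{1+\delta}} \right)^{\mu}.$$

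Finally, it remains to replace this expression by the cleaner $\exp(-\mu\delta^2/3)$, and I expect this last step to be the main obstacle, since it is purely analytic and is exactly where the restriction $0 < \delta \leq 1$ enters. Taking logarithms and dividing by $\mu$, it suffices to prove the scalar inequality $(1+\delta)\ln(1+\delta) \geq \delta + \delta^2/3$ for $0 \leq \delta \leq 1$. I would set $f(\delta) = (1+\delta)\ln(1+\delta) - \delta - \delta^2/3$ and analyze its derivatives: one checks $f(0) = 0$, $f'(\delta) = \ln(1+\delta) - 2\delta/3$ with $f'(0) = 0$, and $f''(\delta) = \tfrac{1}{1+\delta} - \tfrac{2}{3}$, which is nonnegative for $\delta \leq 1/2$ and negative for $\delta > 1/2$. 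Hence $f'$ rises and then falls on $[0,1]$; since $f'(0) = 0$ and a direct check gives $f'(1) = \ln 2 - 2/3 > 0$, we conclude $f' \geq 0$ throughout $[0,1]$, so $f$ is nondecreasing and $f(\delta) \geq f(0) = 0$. This establishes the required inequality and completes the bound.
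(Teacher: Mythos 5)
Your proof is correct, and it is essentially the canonical argument: the paper itself states this theorem without proof, quoting it almost verbatim from Mitzenmacher--Upfal, and your derivation (Markov on $e^{tX}$, the bound $\Exp[e^{tX}] \leq \exp(\mu(e^t-1))$, the optimal choice $t = \ln(1+\delta)$, and the calculus verification of $(1+\delta)\ln(1+\delta) \geq \delta + \delta^2/3$ on $[0,1]$) is precisely the standard proof given in that textbook. No gaps; the second-derivative analysis at the end, including the check $f'(1) = \ln 2 - 2/3 > 0$, is sound.
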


\begin{theorem} \label{bound_small} [Thm 4.5 (2) in~\cite{mitzenmacher_upfal}, almost verbatim] Let $X_1, \dots, X_n$ be independent Poisson trials such that $\Pr(X_i) = p_i$. Let $X = \sum_{i=1}^n X_i$ and $\mu = \Exp\left(X\right)$. Then, for $0 < \delta < 1$:
$$\Pr(X \leq (1 - \delta)\mu) \leq \exp(-\mu \delta^2 / 2).$$
\end{theorem}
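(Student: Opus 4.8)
The statement is the standard lower-tail Chernoff bound, so the plan is to run the usual moment-generating-function argument and then simplify the resulting multiplicative bound. First I would pass to the exponential form: for any $t > 0$, since $x \mapsto e^{-tx}$ is decreasing, the event $X \leq (1-\delta)\mu$ coincides with $e^{-tX} \geq e^{-t(1-\delta)\mu}$, so Markov's inequality gives
$$\Pr(X \leq (1-\delta)\mu) \leq \frac{\Exp[e^{-tX}]}{e^{-t(1-\delta)\mu}}.$$

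Next I would control the numerator using independence of the $X_i$. Factoring $\Exp[e^{-tX}] = \prod_{i=1}^n \Exp[e^{-tX_i}]$ and computing $\Exp[e^{-tX_i}] = 1 + p_i(e^{-t}-1)$, the inequality $1+x \leq e^x$ yields $\Exp[e^{-tX_i}] \leq \exp(p_i(e^{-t}-1))$, so that $\Exp[e^{-tX}] \leq \exp(\mu(e^{-t}-1))$ after summing the exponents and recalling $\mu = \sum_i p_i$. Combining with the display above,
$$\Pr(X \leq (1-\delta)\mu) \leq \exp\left(\mu(e^{-t}-1) + t(1-\delta)\mu\right).$$
I would then optimize the exponent over $t > 0$: differentiating $e^{-t}-1 + t(1-\delta)$ and setting it to zero gives the minimizer $t = \ln\frac{1}{1-\delta}$, which is positive precisely because $0 < \delta < 1$. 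Substituting back produces the multiplicative-form bound
$$\Pr(X \leq (1-\delta)\mu) \leq \left(\frac{e^{-\delta}}{(1-\delta)^{1-\delta}}\right)^{\mu}.$$

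The final and only delicate step is to replace this with the clean expression $\exp(-\mu\delta^2/2)$. Taking logarithms and dividing by $\mu$, this amounts to the real-analytic inequality $-\delta - (1-\delta)\ln(1-\delta) \leq -\delta^2/2$ on $(0,1)$. I would prove it by setting $g(\delta) = -\delta - (1-\delta)\ln(1-\delta) + \delta^2/2$ and checking $g \leq 0$: one has $g(0) = 0$, then $g'(\delta) = \delta + \ln(1-\delta)$ with $g'(0)=0$, and $g''(\delta) = -\delta/(1-\delta) < 0$ on $(0,1)$. Hence $g'$ is strictly negative and $g$ is decreasing on $(0,1)$, so $g(\delta) \leq g(0) = 0$, which is exactly the required inequality. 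Raising $e^{g(\delta)\mu} \leq 1$ through the multiplicative bound gives the theorem. The main obstacle is this last monotonicity argument — verifying the transcendental inequality via the second derivative — whereas every preceding step is the routine Chernoff computation.
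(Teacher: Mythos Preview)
Your argument is correct and is essentially the standard textbook derivation of the lower-tail Chernoff bound: Markov on the moment generating function, independence to factor, the $1+x\le e^x$ estimate, optimization in $t$, and then the calculus verification that the multiplicative form dominates $\exp(-\mu\delta^2/2)$. Each step checks out, including the second-derivative computation $g''(\delta)=-\delta/(1-\delta)$.

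As for comparison with the paper: there is nothing to compare. The paper does not prove this statement at all --- it is quoted ``almost verbatim'' from Mitzenmacher and Upfal as a black-box tool (Theorem~4.5(2) there), and is used only as an input to the concentration arguments in Lemmas~\ref{lem:induction}, \ref{lem:2coin}, and \ref{lem:3coin_bd}. So you have supplied a full proof where the paper simply cites one; your derivation is precisely the one given in the cited reference.
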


\end{document}